\documentclass[12pt,a4paper,reqno]{amsart}
\usepackage{amsmath,amssymb,amsthm}
\usepackage{tikz-cd}
\usepackage[margin=1.15in]{geometry}
\usepackage{hyperref}

\usepackage{cleveref}
\crefformat{equation}{(#2#1#3)}
\Crefformat{equation}{(#2#1#3)}

\newtheorem{theorem}{Theorem}
\newtheorem{maintheorem}[theorem]{Theorem}
\newtheorem{proposition}{Proposition}
\newtheorem{lemma}{Lemma}
\newtheorem{corollary}{Corollary}

\theoremstyle{remark}
\newtheorem{remark}{Remark}
\newtheorem{example}{Example}

\crefname{maintheorem}{Theorem}{Theorems}
\Crefname{maintheorem}{Theorem}{Theorems}

\newcommand{\fe}{\mathfrak{e}}
\newcommand{\fo}{\mathfrak{o}}
\newcommand{\fa}{\mathfrak{a}}
\newcommand{\fb}{\mathfrak{b}}
\newcommand{\C}{\mathbb{C}}
\newcommand{\D}{\mathbb{D}}
\newcommand{\R}{\mathbb{R}}
\newcommand{\Lom}{\mathfrak{L}}
\newcommand{\sjz}{j_{\nu,1}}

\title{Lommel polynomials and explicitly solvable prediction problems on the unit circle}

\author{Steven P. Clark}
\address{University of North Carolina at Charlotte}
\email{spclark@charlotte.edu}

\subjclass[2020]{Primary 42C05; Secondary 33C10, 33C47, 47B35, 60G25}
\keywords{Orthogonal polynomials on the unit circle, Verblunsky coefficients,
  linear prediction, Toeplitz determinants, Lommel polynomials, Bessel functions,
  Christoffel function, chain sequences}

\begin{document}

\begin{abstract}
To each finite symmetric measure $\sigma$ on the real line, with support a compact subset of $(-2,2)$,
we associate a measure $\mu$ on the unit circle by transporting mass at $\pm x$ to $e^{\pm i\theta(x)}$, $\theta(x)=2\arcsin(x/2)$.
The linear prediction errors, Verblunsky coefficients, and Toeplitz determinants of $\mu$ are then expressed through the orthogonal polynomial data of $\sigma$ at the single edge point $x=2$.
In particular $E_m(\mu)=\tfrac12 t_m\|P_m\|_\sigma^2$ with $t_m=P_{m+1}(2)/P_m(2)$.
Under a condition on the first coefficients, decay of the recurrence coefficients of $\sigma$ forces the $t_m$ to increase from $t_1$ onward,
a Tur\'an-type monotonicity in the degree placing every prediction margin of $\mu$ past the first above the corresponding coefficient of $\sigma$.
Taking $\sigma$ to be the Lommel-polynomial measures, with atoms at rescaled reciprocals of the zeros of the Bessel function $J_\nu$,
yields a two-parameter family of purely atomic circle measures with a normalized determinant limit equal to a value of $J_\nu$.
\end{abstract}

\maketitle

\section{Introduction}\label{sec:intro}

Let $\D$ denote the open unit disk in $\C$ and let $\mu$ be a finite positive measure with infinite support on the unit circle $\partial\D$.
Writing $\mathcal{P}_{m-1}$ for the polynomials in $\C[z]$ of degree at most $m-1$, with $\mathcal{P}_{-1}=\{0\}$, let
\begin{equation}\label{eq:monicprederrors}
E_m(\mu)=\min\Bigl\{\int_{\partial\D}|\Phi|^2\,d\mu:\ \Phi\in z^m+\mathcal{P}_{m-1}\Bigr\},\qquad m\ge0,
\end{equation}
be the \emph{monic prediction errors} of $\mu$.
The minimizers $\Phi_m$ are the monic orthogonal polynomials of $\mu$, the values $\Phi_m(0)$ carry the \emph{Verblunsky coefficients},
and the ratios $E_m/E_{m-1}=1-|\Phi_m(0)|^2$ are the \emph{prediction margins}.\footnote{
Our OPUC definition differs slightly from that of a standard modern reference.
Unlike Simon \cite{Simon1,Simon2}, we do not normalize $\mu$ to a probability measure, since the total mass plays a role in what follows.}
These quantities are central to the Szeg\H{o} theory of orthogonal polynomials on the unit circle (OPUC)
\cite{Szego,Simon1} and to linear prediction \cite{GrenanderSzego,Simon1}.

Families for which the $E_m$ admit closed forms are rare. The standard examples, the Bernstein-Szeg\H{o} measures, are absolutely continuous,
and explicit purely atomic examples are scarcer still.

In this paper we exhibit a large class of solvable purely atomic cases.
The construction starts from a finite symmetric measure $\sigma$ on the real line with $\operatorname{supp}\sigma\subset(-2,2)$, and
pushes it forward to the circle under $x\mapsto e^{i\theta(x)}$, where
\begin{equation}\label{eq:arcsinmap}
\theta(x)=2\arcsin\bigl(x/2\bigr),\qquad -2<x<2.
\end{equation}
Since $\theta$ is an odd function, the pair $\pm x$ maps to the conjugate pair $e^{\pm i\theta(x)}$,
and we call the resulting circle measure $\mu=\Lambda\sigma$ the \emph{arcsin lift} of $\sigma$.

This particular lift is chosen so that the image of $\mu$ under $z=e^{i\theta}\longmapsto s=|z-1|^2=2-2\cos\theta$
coincides with the image of $\sigma$ under $x\mapsto s=x^2$. Indeed, $|e^{i\theta(x)}-1|^2=2-2\cos\theta(x)=x^2$,
so that the chord length from $e^{i\theta(x)}$ to $1$ on the unit circle is $|x|$.

Our first result expresses the prediction theory of $\mu$ through a single scalar sequence built from the
orthogonal polynomials of $\sigma$ evaluated at the edge $x=2$, the endpoint corresponding to the antipode $z=-1$.

\begin{maintheorem}\label{thm:main}
Let $\sigma$ be a finite positive symmetric measure with infinite support contained in $[-R,R]$, $R<2$,
and let $P_m$ be its monic orthogonal polynomials satisfying the recursion
\[P_{m+1}(x)=x\,P_m(x)-A_m^2\,P_{m-1}(x),\quad m\geq 1,\]
with $P_0=1$ and $P_1=x$. For $m\ge0$, define the edge ratio, $t_m$, by
\begin{equation}\label{eq:tdef}
t_m=\frac{P_{m+1}(2)}{P_m(2)},\qquad\text{so that}\qquad t_0=2,\quad t_m=2-\frac{A_m^2}{t_{m-1}} .
\end{equation}
Then for the arcsin lift $\mu=\Lambda\sigma$, whose monic orthogonal polynomials $\Phi_m$ have real
coefficients by conjugation symmetry, so that $\Phi_m(0)\in\R$ (see the opening of the proof of \Cref{lem:split}),
\begin{equation}\label{eq:mainE}
E_m(\mu)=\frac{t_m}{2}\,\|P_m\|_\sigma^2, \qquad \Phi_m(0)=(-1)^m\,(t_m-1), \quad\text{for}\quad m\ge0.
\end{equation}
Consequently, for $m\ge1$ and $N\ge0$,
\begin{eqnarray}
\frac{E_m}{E_{m-1}}&=&1-\Phi_m(0)^2=t_m\,(2-t_m)=A_m^2\,\frac{t_m}{t_{m-1}},\label{eq:mainmargin}\\
D_N(\mu)&=&\frac{P_{N+1}(2)}{2^{N+1}}\,\prod_{m=0}^{N}\|P_m\|_\sigma^2,\label{eq:maindet}
\end{eqnarray}
where $D_N=\det\bigl[c_{j-k}\bigr]_{j,k=0}^{N}$, $c_k=\int_{\partial\D}z^{-k}\,d\mu$, is the Toeplitz determinant of order $N+1$ of $\mu$.
\end{maintheorem}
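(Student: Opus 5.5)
The plan is to reduce the circle minimization problem \eqref{eq:monicprederrors} to a real-line problem for $\sigma$ via the half-angle variable $u=z^{1/2}=e^{i\theta/2}$. On the support of $\mu$ we have $u-u^{-1}=ix$ and $u+u^{-1}=\sqrt{4-x^2}$, with $x=2\sin(\theta/2)$. First, since $\sigma$ is symmetric, $\mu$ is invariant under $z\mapsto\bar z$. Uniqueness of the minimizer then forces $\Phi_m$ to have real coefficients, so we may restrict to real $\Phi\in z^m+\mathcal P_{m-1}$. For such $\Phi$, write
\[
z^{-m/2}\Phi(z)=\sum_j a_j u^{2j-m}
\]
and group the terms $u^{n}$ and $u^{-n}$ into symmetric and antisymmetric combinations. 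Each $u^n\pm u^{-n}$ is a polynomial in $ix$ or a polynomial in $ix$ times $\sqrt{4-x^2}$, as with Chebyshev polynomials of the first and second kind. The outcome I aim for is an identity
\[
|\Phi(e^{i\theta(x)})|^2=f(x)^2+(4-x^2)\,g(x)^2 ,
\]
where $f$ and $g$ are real polynomials of degrees $m$ and $m-1$ with the parity of $m$ and $m-1$ respectively. Here $\Phi\mapsto(f,g)$ is a real-linear bijection onto pairs with the appropriate parities, and monicity of $\Phi$ becomes a single linear constraint, roughly $\mathrm{lc}(f)+\mathrm{lc}(g)=2^{1-m}$ up to fixed signs. (This should be the content of \Cref{lem:split}.)

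With this splitting,
\[
E_m(\mu)=\min\Bigl\{\int f^2\,d\sigma+\int g^2\,(4-x^2)\,d\sigma\Bigr\}
\]
subject to the leading-coefficient constraint. For fixed leading coefficients $\alpha=\mathrm{lc}(f)$ and $\beta=\mathrm{lc}(g)$, the two integrals are minimized separately. The first minimum is $\alpha^2\|P_m\|_\sigma^2$. The second is $\beta^2\|Q_{m-1}\|^2_{(4-x^2)\sigma}$, where $Q_{m-1}$ is the monic orthogonal polynomial of the Christoffel-modified measure $(4-x^2)\,d\sigma$. Parity is automatic because $\sigma$ and $(4-x^2)\sigma$ are symmetric. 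I would evaluate this norm with the Christoffel formula for the two zeros $\pm2$. By symmetry, $P_{m}(-2)=(-1)^mP_m(2)$, and the $2\times2$ determinant reduces to
\[
\|Q_{m-1}\|^2_{(4-x^2)\sigma}=\frac{2P_{m+1}(2)}{P_{m-1}(2)}\,\|P_{m}\|_\sigma^2\quad\text{(or similar).}
\]
I expect it to be expressible as $\|P_m\|_\sigma^2$ times a product of consecutive edge ratios. The remaining problem is to minimize $a\alpha^2+b\beta^2$ under $\alpha+\beta=c$, which gives $c^2ab/(a+b)$. The main computation is to check that this simplifies to $\tfrac{t_m}{2}\|P_m\|_\sigma^2$ using $t_{m-1}(2-t_m)=A_m^2$ and $\|P_m\|^2=A_m^2\|P_{m-1}\|^2$. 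I expect pinning down the exact constant in the Christoffel norm, together with the signs and powers of $2$ in the splitting, to be the main obstacle. As a consistency check, the case $m=0$ should give $E_0=\mu(\partial\D)=\sigma(\R)$, which matches $t_0=2$.

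The rest follows formally from \eqref{eq:mainE}. For $m\ge1$, the margin is
\[
\frac{E_m}{E_{m-1}}=\frac{t_m}{t_{m-1}}\,\frac{\|P_m\|^2}{\|P_{m-1}\|^2}=A_m^2\,\frac{t_m}{t_{m-1}},
\]
and substituting $A_m^2=t_{m-1}(2-t_m)$ gives $t_m(2-t_m)=1-(t_m-1)^2$. Comparing with the Szeg\H{o} recursion identity $E_m/E_{m-1}=1-\Phi_m(0)^2$ yields $|\Phi_m(0)|=|t_m-1|$. To fix the sign $(-1)^m$, I would read off the constant term of the optimal $\Phi$ from the optimizer $(\alpha,\beta)$, or evaluate $\Phi_m$ at $z=1$ through the splitting. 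Finally, $D_N=\prod_{m=0}^N E_m$ (the standard Toeplitz–Szeg\H{o} identity), and the product
\[
\prod_{m=0}^N \frac{t_m}{2}=\frac{P_{N+1}(2)}{2^{N+1}}
\]
telescopes because $P_0(2)=1$. This gives \eqref{eq:maindet}.
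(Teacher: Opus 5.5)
Your argument is correct in structure and is essentially the paper's proof carried out in $x$ rather than $s=x^2$. Grouping $u^{k}$ with $u^{-k}$ is exactly the reciprocal/anti-reciprocal split $\Phi=R+I$ of \Cref{lem:split}: on the circle $z^{-m/2}R$ is real and $z^{-m/2}I$ is imaginary, which is why no cross term appears. Your $a=\|P_m\|_\sigma^2$ and $b=\min\int(4-x^2)g^2\,d\sigma$ are the paper's plain and tilted minima, namely $\fe_n,\fo_n$ when $m=2n$ and $\fa_n,\fb_n$ when $m=2n+1$. Staying in $x$ gives a parity-free argument with a single symmetric Christoffel modification $(4-x^2)\,d\sigma$. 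The paper instead separates even and odd $m$, passes through the reductions $\omega,\eta$, and applies the one-point transform of \Cref{lem:christoffel} at $s=4$. In exchange, its variable $s$ avoids the half-angle square root and displays the weights $s$, $4-s$, $s(4-s)$ as the squared chords $|z-1|^2$, $|z+1|^2$, $|z^2-1|^2$. Your margin and determinant steps are the paper's.

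The constants you wrote down tentatively are not the right ones, and with them the final simplification would not close. Since $x=2\sin(\theta/2)$, the combinations $u^m\pm u^{-m}$ are, up to sign and a factor $i$, either $2T_m(x/2)=x^m+\cdots$ or $\sqrt{4-x^2}\,U_{m-1}(x/2)=\sqrt{4-x^2}\,(x^{m-1}+\cdots)$. So the monic constraint is $\pm\mathrm{lc}(f)\pm\mathrm{lc}(g)=1$, with no power of $2$. For the tilted norm, orthogonality to degrees $\le m-2$ and parity force $(4-x^2)Q_{m-1}=-P_{m+1}+c\,P_{m-1}$, and vanishing at $x=2$ gives $c=P_{m+1}(2)/P_{m-1}(2)$. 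Integrating against $Q_{m-1}$ then gives $b=c\,\|P_{m-1}\|_\sigma^2=t_mt_{m-1}\|P_{m-1}\|_\sigma^2$, not the expression you displayed. With these values, $a/b=A_m^2/(t_mt_{m-1})=(2-t_m)/t_m$ and $ab/(a+b)=\tfrac{t_m}{2}\|P_m\|_\sigma^2$. Moreover $\Phi_m(0)=a_0$ enters the two leading coefficients as $\tfrac12(1\pm a_0)$, with the roles of $f$ and $g$ exchanged between even and odd $m$. Reading $a_0$ off the optimal split therefore yields $\Phi_m(0)=(-1)^m(t_m-1)$ directly, sign included.
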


\Cref{lem:bijection} establishes that every nontrivial conjugation-symmetric measure on $\partial\D$ supported away from $z=-1$ is the arcsin lift of a unique such $\sigma$.
Thus \Cref{thm:main} is a statement about every conjugation-symmetric measure with infinite support on a proper closed sub-arc of $\partial\D$ avoiding $z=-1$.
The proof goes through the even and odd reciprocal splittings of the extremal polynomials;
the four constrained minima that appear all collapse,
two through the even and odd reductions and two through one Christoffel transform evaluated at the antipodal point,
to values of the $P_m$ at $x=2$.

These constructs have deep roots.
The splitting of $\Phi_m$ into reciprocal and anti-reciprocal parts underlies the split Levinson algorithm of
Delsarte and Genin \cite{DelsarteGenin86}, and the correspondence between measures on the circle with
real Verblunsky coefficients and symmetric measures on an interval through $2x=z^{1/2}+z^{-1/2}$, a
rotation of \Cref{eq:arcsinmap}, is their transformation \cite{DelsarteGenin91}; see also the classical
Szeg\H{o} mapping and the Geronimus relations \cite{Szego,Geronimus,Simon2}.
In the chain sequence form of Costa, Felix and Sri Ranga \cite{CostaFelixRanga} and continued in
\cite{CastilloCostaRangaVeronese,BraccialiSilvaRangaVeronese}, every nontrivial probability measure on the
circle corresponds to a pair of real sequences, one a positive chain sequence, and for the
conjugation-symmetric measures considered here the Verblunsky coefficients are $\pm(1-2g_m)$ with
$\{g_m\}$ the minimal parameter sequence of that chain sequence in the sense of Wall \cite{Wall} and Chihara \cite{Chihara}.
The value formula in \Cref{eq:mainE} is equivalent, after the rotation $z\mapsto-z$, to this correspondence,
with parameters $g_m=1-t_m/2$, so that $\{A_m^2/4\}$ is the chain sequence and the recurrence coefficients satisfy $A_m^2=4g_m(1-g_{m-1})$,
in the normalization with support in $[-2,2]$.
By Favard's theorem \cite{Chihara} and Wall's criterion every sequence with $g_0=0$ and $g_m\in(0,1)$ arises from a symmetric measure on $[-2,2]$ with infinite support,
and \Cref{thm:main} parametrizes those measures supported strictly inside $(-2,2)$ by their edge data.

What \Cref{thm:main} contributes to this lineage is the explicit transport to the line measure $\sigma$,
the chain parameters realized as the edge ratios \Cref{eq:tdef}, whose sequence we call the \emph{edge chain} of $\sigma$,
and the closed factorizations of the prediction errors and Toeplitz determinants through the norms of $\sigma$ and the single value $P_{N+1}(2)$,
which we have not previously seen in this form.

We find that, under a condition on the first recurrence coefficients, the recursion in \Cref{eq:tdef} converts decay of the recurrence coefficients
into growth of the ratios $t_m$ from $t_1$ onward, a Tur\'an-type log-convexity of $m\mapsto P_m(2)$ for $m\ge2$ at the edge $x=2$, beyond the spectrum of $\sigma$.
By \Cref{thm:mono}, this forces every prediction margin of the lift past the first to exceed
the corresponding $A_m^2$ by the monotone factor $t_m/t_{m-1}$.
Tur\'an-type inequalities for Lommel functions and polynomials, Bessel functions, and related symmetric orthogonal polynomial systems
have a substantial literature \cite{BariczTuran,BustozIsmail} on the interval of orthogonality,
where the Tur\'an determinant is nonnegative.
Beyond the spectrum it turns negative, classically for the ultraspherical family \cite{VenkatachaliengarRao},
and the log-convexity here is that sign change for the Lommel-polynomial sequence.

We are then able to exhibit a family for which the construction is explicitly solvable.
We call a purely atomic symmetric measure \emph{canonical} when the atom at each $\pm x$ has mass $x^2$
(the lift then weights each atom by the squared chord $|1-z|^2$ to the point $z=1$).
For $\nu>-1$
let $0<j_{\nu,1}<j_{\nu,2}<\cdots$ be the positive zeros of the Bessel function of the first kind $J_\nu$.
For $0<\beta<2\sjz$ let $\Lom_{\nu,\beta}$ be the purely atomic measure placing mass $(\beta/j_{\nu,k})^2$ at each of the points $\pm\beta/j_{\nu,k}$.
The orthogonal polynomials of $\Lom_{\nu,\beta}$ are dilated Lommel polynomials \cite{Dickinson,DickinsonPollakWannier,Watson,Chihara,Ismail,Koelink}, with
\begin{equation}\label{eq:lommelA}
A_m^2\bigl(\Lom_{\nu,\beta}\bigr)=\frac{\beta^2}{4(\nu+m)(\nu+m+1)},\qquad m\ge1.
\end{equation}
We are not aware of the Lommel polynomials, or the measures on $\partial\D$ they generate under the lift, appearing previously in the OPUC literature.

\Cref{thm:lommel} shows that the lifts $\Lambda\Lom_{\nu,\beta}$ form a two-parameter family of
purely atomic measures on $\partial\D$ for which the central quantities of the Szeg\H{o} theory are given explicitly.
The Verblunsky coefficients are $\alpha_{m-1}=(-1)^{m+1}(t_m-1)$, the Toeplitz determinants are Gamma-function products
times a single Lommel polynomial value, and as $N\to\infty$ the normalized edge product $P_{N+1}(2)/2^{N+1}$
converges, by Hurwitz's theorem, to the Bessel value $\Gamma(\nu+1)(4/\beta)^\nu J_\nu(\beta/2)$;
the evaluation point is the antipode $z=-1$, off the spectrum.
The deviations obey a third-order law,
\begin{equation*}
\log\frac{1-\Phi_m(0)^2}{A_m^2} = \log\frac{t_m}{t_{m-1}}
= \frac{\beta^2}{8(\nu+m-1)(\nu+m)(\nu+m+1)}\,\bigl(1+O(m^{-2})\bigr),
\end{equation*}
positive for every $m\ge2$ once the first recurrence coefficients satisfy the condition of \Cref{thm:mono}.
The Bessel function appears through the support and through this normalized limit, each time in closed form,
while the deviation law is rational.

The two real-line sequences behind \Cref{thm:main}, the edge values $P_m(2)$ and the norms $\|P_m\|_\sigma^2$,
combine into the Christoffel function of $\sigma$ at $x=2$.
\Cref{sec:christoffel} casts the determinant formula and Bessel limit as statements about this function,
identifies $\mathcal J_{\nu,\beta}$ as the limiting ratio of the circle Toeplitz determinant to the line Hankel determinant,
and gives closed-form asymptotics for the Christoffel function of the Lommel ensemble,
its super-exponential decay at the antipode $z=-1$ recording the support's accumulation at $z=1$.
This is why we package the theory through the value at $x=2$ rather than the Verblunsky coefficients,
since the degree-monotonicity and the Lommel collapse are both assertions about that single point,
which the edge data displays and the Verblunsky coefficients do not.

\section{The arcsin lift}\label{sec:lift}
Throughout, $\sigma$ is a finite positive symmetric measure on $\R$ with infinite support contained in
$[-R,R]$, $R<2$, and $\tau_0=\sigma(\R)$. Its monic orthogonal polynomials $P_m$ satisfy the symmetric recurrence
\[P_{m+1}(x)=x\,P_m(x)-A_m^2\,P_{m-1}(x),\qquad A_m^2>0,\]
the diagonal coefficients $\langle x P_m,P_m\rangle/\|P_m\|_\sigma^2$ vanishing because $\sigma$ is symmetric,
and $\|P_m\|_\sigma^2=\tau_0\prod_{j=1}^{m}A_j^2$, with the empty product at $m=0$ being $1$ as usual.

Let $\mathcal{M}_{(-2,2)}^{sym}$ denote the set of all finite positive symmetric measures $\sigma$ on $\R$ with infinite support and $\operatorname{supp}(\sigma)\subset(-2,2)$.
For the unit circle measures, let $\mathcal{M}_{\partial\D\setminus\{-1\}}^{\overline{sym}}$ denote the set of all
finite positive conjugation symmetric measures $\mu$ on $\partial\D$ with infinite support and $-1\notin\operatorname{supp}(\mu)$.

Since the support of a finite measure on $\R$ is closed in $\R$, the condition $\operatorname{supp}(\sigma)\subset(-2,2)$
already makes $\operatorname{supp}(\sigma)$ a compact subset of $(-2,2)$, hence contained in some $[-R,R]$ with $R<2$,
so the two phrasings used in this paper are equivalent.

The arcsin lift $\mu=\Lambda\sigma$ is the pushforward $\Psi_*\sigma$ of $\sigma$ under the transport map
$\Psi(x)=e^{\,i\,\operatorname{sgn}(x)\theta(|x|)}$, with $\theta$ as in \Cref{eq:arcsinmap} and $\Psi(0)=1$,
the measure on $\partial\D$ determined by $\mu(B)=\sigma\bigl(\Psi^{-1}(B)\bigr)$ for every Borel set $B$.
The map $\Psi\colon(-2,2)\to \partial\D\setminus\{-1\}$, $\Psi(x)=e^{i\theta(x)}$ with $\theta(x)=2\arcsin(x/2)$,
is a homeomorphism, being the composition of the continuous increasing bijection $x\mapsto\theta(x)$ of $(-2,2)$ onto $(-\pi,\pi)$
with $\theta\mapsto e^{i\theta}$; its inverse is $\Psi^{-1}(e^{i\theta})=2\sin(\theta/2)$, $|\theta|<\pi$.
In particular $\Psi$ is a Borel isomorphism, carrying Borel sets to Borel sets with $\Psi^{-1}(\Psi(A))=A$ for every Borel $A\subset(-2,2)$.

The following lemma is routine measure theory, and a reader willing to trust it may pass directly to \Cref{sec:split}.
We include a detailed proof because several separate facts require checking,
injectivity through the Borel isomorphism, conjugation symmetry on both sides through the intertwining \Cref{eq:conjintertwine},
and membership and surjectivity through supports, and everything after this section stands on them.

\begin{lemma}\label{lem:bijection}
The arcsin lift $\Lambda$ is a bijection from $\mathcal{M}_{(-2,2)}^{sym}$
onto $\mathcal{M}_{\partial\D\setminus\{-1\}}^{\overline{sym}}$.
It preserves total mass, and its inverse is the pushforward under $e^{i\theta}\mapsto 2\sin(\theta/2)$, $|\theta|<\pi$.
\end{lemma}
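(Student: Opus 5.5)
The plan is to exploit the fact, recorded just before the statement, that $\Psi\colon(-2,2)\to\partial\D\setminus\{-1\}$ is a homeomorphism and hence a Borel isomorphism. Let $\Xi=\Psi^{-1}$, so $\Xi(e^{i\theta})=2\sin(\theta/2)$ for $|\theta|<\pi$. Measures in either class live on these open sets: $\sigma((-2,2)^c)=0$ because the support is a compact subset of $(-2,2)$, and $\mu(\{-1\})=0$ because $-1\notin\operatorname{supp}\mu$. So we may regard them as Borel measures on $(-2,2)$ and on $\partial\D\setminus\{-1\}$ respectively. Pushforward by a Borel isomorphism is then a bijection between finite Borel measures on the two spaces, with inverse $\Xi_*$, since $\Xi_*\Psi_*\sigma=(\Xi\circ\Psi)_*\sigma=\sigma$ and symmetrically. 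Mass preservation is immediate: $\mu(\partial\D)=\sigma(\Psi^{-1}(\partial\D))=\sigma(\R)$.

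Next I will check that the two symmetry conditions correspond. Let $r(x)=-x$ and $\kappa(z)=\bar z$. Since $\theta$ is odd, we have the intertwining $\Psi\circ r=\kappa\circ\Psi$ on $(-2,2)$. For symmetric $\sigma$,
\[
\kappa_*\Psi_*\sigma=(\kappa\circ\Psi)_*\sigma=(\Psi\circ r)_*\sigma=\Psi_*(r_*\sigma)=\Psi_*\sigma,
\]
so $\mu$ is conjugation symmetric. Conversely, for conjugation-symmetric $\mu$, the relation $\Xi\circ\kappa=r\circ\Xi$ on $\partial\D\setminus\{-1\}$ gives $r_*\Xi_*\mu=\Xi_*\mu$ by the same computation.

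The remaining step, which I expect to need the most care though it is not deep, is matching the support conditions. Because $\Psi$ is a homeomorphism of $(-2,2)$ onto $\partial\D\setminus\{-1\}$, the support of $\Psi_*\sigma$ computed inside $\partial\D\setminus\{-1\}$ equals $\Psi(\operatorname{supp}\sigma)$: a point $x$ has a neighbourhood of measure zero exactly when $\Psi(x)$ does. Since $\operatorname{supp}\sigma$ is compact in $(-2,2)$, its image $\Psi(\operatorname{supp}\sigma)$ is compact, hence closed in $\partial\D$. So the support of $\mu$ in $\partial\D$ is exactly this set, and it avoids $-1$. It is also infinite, because $\Psi$ is injective.

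For surjectivity I reverse the argument. Given $\mu$ in the circle class, its support is a closed subset of $\partial\D$ not containing $-1$, hence compact in $\partial\D\setminus\{-1\}$. Then $\Xi(\operatorname{supp}\mu)$ is a compact, infinite subset of $(-2,2)$, and it equals $\operatorname{supp}\Xi_*\mu$. So $\Xi_*\mu\in\mathcal{M}_{(-2,2)}^{sym}$ and $\Lambda(\Xi_*\mu)=\mu$. Injectivity follows from $\Xi_*\Lambda\sigma=\sigma$.
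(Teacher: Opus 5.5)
Your proposal is correct and follows essentially the same route as the paper: a Borel-isomorphism transport, the intertwining $\Psi(-x)=\overline{\Psi(x)}$ for the symmetry conditions, and transfer of supports through the homeomorphism for both membership and surjectivity. The only difference is presentational. You get injectivity and symmetry functorially, from $\Xi_*\Psi_*=\mathrm{id}$ and $(\kappa\circ\Psi)_*=(\Psi\circ r)_*$, whereas the paper checks them set by set through $\sigma(A)=(\Lambda\sigma)(\Psi(A))$ and $\Psi^{-1}(\overline{B})=-\Psi^{-1}(B)$.
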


\begin{proof}
Let $\sigma_1,\sigma_2\in\mathcal{M}_{(-2,2)}^{sym}$ such that $\Lambda\sigma_1=\Lambda\sigma_2$.
By definition $(\Lambda\sigma_i)(B)=\sigma_i\bigl(\Psi^{-1}(B)\bigr)$ for Borel $B\subset\partial\D\setminus\{-1\}$.
Because $\operatorname{supp}(\sigma_i)\subset(-2,2)$, each $\sigma_i$ vanishes off $(-2,2)$ and is determined by its values on Borel subsets of $(-2,2)$.
For such a set $A$, the image $\Psi(A)$ is Borel and
\[\sigma_i(A)=\sigma_i\bigl(\Psi^{-1}(\Psi(A))\bigr)=(\Psi_*\sigma_i)(\Psi(A))=(\Lambda\sigma_i)(\Psi(A)).\]
The right-hand side is independent of $i$ by hypothesis, so $\sigma_1(A)=\sigma_2(A)$ for every Borel $A\subset(-2,2)$,
and therefore $\sigma_1=\sigma_2$. Thus $\Lambda$ is injective.

Since $\arcsin$ is odd,
\begin{equation}\label{eq:conjintertwine}
\Psi(-x)=e^{-2i\arcsin(x/2)}=\overline{\Psi(x)},\qquad x\in(-2,2).
\end{equation}

Next, $\Lambda$ maps $\mathcal{M}_{(-2,2)}^{sym}$ into $\mathcal{M}_{\partial\D\setminus\{-1\}}^{\overline{sym}}$.
Let $\sigma\in\mathcal{M}_{(-2,2)}^{sym}$, so that $(\Lambda\sigma)(\partial\D)=\sigma\bigl(\Psi^{-1}(\partial\D)\bigr)=\sigma(\R)$.
The set $\Psi(\operatorname{supp}(\sigma))$ is an infinite compact subset of $\partial\D\setminus\{-1\}$,
$\Psi$ being injective and continuous on the compact set $\operatorname{supp}(\sigma)$,
and its complement in $\partial\D$ is open with $(\Lambda\sigma)$-mass $\sigma\bigl((-2,2)\setminus\operatorname{supp}(\sigma)\bigr)=0$.
Since $\Psi$ carries neighborhoods in $(-2,2)$ to neighborhoods in $\partial\D\setminus\{-1\}$,
every point of $\Psi(\operatorname{supp}(\sigma))$ lies in $\operatorname{supp}(\Lambda\sigma)$,
so $\operatorname{supp}(\Lambda\sigma)=\Psi(\operatorname{supp}(\sigma))$ is infinite and omits $-1$.
For Borel $B\subset\partial\D\setminus\{-1\}$, \Cref{eq:conjintertwine} gives $\Psi^{-1}(\overline B)=-\Psi^{-1}(B)$,
so the symmetry of $\sigma$ yields
\[(\Lambda\sigma)(\overline B)=\sigma\bigl(-\Psi^{-1}(B)\bigr)=\sigma\bigl(\Psi^{-1}(B)\bigr)=(\Lambda\sigma)(B),\]
and $\Lambda\sigma$ is conjugation symmetric.

Let $\mu\in\mathcal{M}_{\partial\D\setminus\{-1\}}^{\overline{sym}}$.
As $\operatorname{supp}(\mu)$ is closed in the compact space $\partial\D$ and does not contain $z=-1$, it is a compact subset of $\partial\D\setminus\{-1\}$.
In particular $\mu(\{-1\})=0$, so $\mu$ is carried by $\partial\D\setminus\{-1\}$ and may be regarded as a finite Borel measure on $\partial\D\setminus\{-1\}$.
Define $\sigma=(\Psi^{-1})_*\mu$, a finite positive Borel measure on $(-2,2)$, extended to a Borel measure on $\R$ by defining it to be zero on $\R\setminus (-2,2)$.
Its total mass is $\sigma(\R)=\mu(\partial\D)$, and for every Borel $A\subset(-2,2)$,
\begin{equation}\label{eq:sigmaformula}
\sigma(A)=\mu\bigl(\Psi(A)\bigr),
\end{equation}
since $(\Psi^{-1})^{-1}(A)=\Psi(A)$.
Because $\Psi^{-1}$ is a homeomorphism, $\operatorname{supp}(\sigma)=\Psi^{-1}(\operatorname{supp}(\mu))$.
To see this, realize that a point $x$ lies in $\operatorname{supp}(\sigma)$ if and only if every neighborhood has positive $\sigma$-mass,
and \eqref{eq:sigmaformula} turns neighborhoods of $x$ into neighborhoods of $\Psi(x)$ in $\partial\D\setminus\{-1\}$.
Hence $\operatorname{supp}(\sigma)$ is the continuous image of the compact set $\operatorname{supp}(\mu)$, so it is a compact subset of $(-2,2)$,
implying $\operatorname{supp}(\sigma)\subset(-2,2)$.
Moreover, $\operatorname{supp}(\sigma)$ is infinite because $\Psi^{-1}$ is injective and $\operatorname{supp}(\mu)$ is infinite.

For Borel $A\subset(-2,2)$, using \eqref{eq:sigmaformula}, then \eqref{eq:conjintertwine}, then the conjugation symmetry of $\mu$,
\[\sigma(-A)=\mu\bigl(\Psi(-A)\bigr)=\mu\bigl(\overline{\Psi(A)}\bigr)=\mu\bigl(\Psi(A)\bigr)=\sigma(A),\]
showing that $\sigma$ is symmetric.

Since pushforward satisfies $\Psi_*\circ\Psi^{-1}_*=(\Psi\circ\Psi^{-1})_*$ and $\Psi\circ\Psi^{-1}$ is the identity on $\partial\D\setminus\{-1\}$, we have
\[\Lambda\sigma=\Psi_*\sigma=\Psi_*(\Psi^{-1})_*\mu=(\Psi\circ\Psi^{-1})_*\mu=\mu\]
as measures on $\partial\D\setminus\{-1\}$. Since $\sigma((-2,2))=\mu(\partial\D\setminus\{-1\})=\mu(\partial\D)$,
the lift $\Lambda\sigma$ also gives zero mass to $\{-1\}$, so the equality holds on $\partial\D$.
Thus $\Lambda$ is surjective.
\end{proof}

The lift is supported in the arc $|\theta|\le\theta(R)<\pi$, with $\mu(\partial\D)=\tau_0$.
Since $2-2\cos\theta(x)=4\sin^2(\theta(x)/2)=x^2$, the image of $\mu$ under $e^{i\theta}\mapsto s=2-2\cos\theta$
equals the image $\omega$ of $\sigma$ under $x\mapsto s=x^2$; we write $\omega$ for this common measure on $[0,R^2]$ and $\eta=s\,d\omega$.
So we work with lifts without loss of generality by \Cref{lem:bijection}.

We frequently use the standard even and odd reductions of $\sigma$.
The maps $f(s)\mapsto f(x^2)$ and $f(s)\mapsto x f(x^2)$ are isometries of $L^2(\omega)$ and $L^2(\eta)$ onto the even and odd subspaces of
$L^2(\sigma)$, carrying monic polynomials of degree $n$ to those of degrees $2n$ and $2n+1$.
Hence the monic orthogonal polynomials $q_n$ of $\omega$ and $r_n$ of $\eta$ satisfy
\begin{equation}\label{eq:evenodd}
q_n(x^2)=P_{2n}(x),\qquad x\,r_n(x^2)=P_{2n+1}(x),\qquad \|q_n\|_\omega^2=\|P_{2n}\|_\sigma^2,\qquad \|r_n\|_\eta^2=\|P_{2n+1}\|_\sigma^2 .
\end{equation}
In particular, at $x=2$, that is $s=4$,
\begin{equation}\label{eq:edgevalues}
q_n(4)=P_{2n}(2),\qquad r_n(4)=\tfrac12\,P_{2n+1}(2).
\end{equation}
Since the zeros of these polynomials lie in the convex hull of the supports, $P_m(2)>0$, $q_n(4)>0$
and $r_n(4)>0$ for all degrees, and the edge chain \Cref{eq:tdef} is well defined and positive,
the recursion in \Cref{eq:tdef} is the three-term recurrence at $x=2$ together with $P_1(x)=x$.

\section{The splitting and the proof of \Cref{thm:main}}\label{sec:split}

The next lemma is a block-diagonalization. Write a polynomial $\Phi(z)=\sum_{k=0}^m c_kz^k$ in the coordinates $(c_0,\dots,c_m)$.
The reciprocal involution $\Phi\mapsto\Phi^*$ reverses their order, $(c_0,\dots,c_m)\mapsto(c_m,\dots,c_0)$,
so it acts as the flip matrix, with ones on the antidiagonal and zeros elsewhere.
Its $\pm1$ eigenspaces are the reciprocal and anti-reciprocal polynomials, and a real symmetric Toeplitz matrix commutes with the flip,
so the Gram matrix preserves these eigenspaces and splits into two blocks.
The vanishing cross term below is the statement that the off-diagonal block is zero, and the four minima are the half-line problems on the two blocks.

\begin{lemma}\label{lem:split}
Let $\mu=\Lambda\sigma$ be the arcsin lift, and write $d\eta=s\,d\omega$ for the odd reduction.
With every minimum taken over monic $Q\in\mathbb{R}[s]$, define for $n\ge1$
\[\fe_n=\min_{\deg Q=n}\int Q^2\,d\omega,\qquad \fo_n=\min_{\deg Q=n-1}\int s(4-s)\,Q^2\,d\omega,\]
and for $n\ge0$
\[\fa_n=\min_{\deg Q=n}\int s\,Q^2\,d\omega,\qquad \fb_n=\min_{\deg Q=n}\int (4-s)\,Q^2\,d\omega.\]
Then $E_0(\mu)=\tau_0$ and $\Phi_0(0)=1$, while
\[E_{2n}(\mu)=\frac{\fe_n\fo_n}{\fe_n+\fo_n},\quad n\ge1,\qquad E_{2n+1}(\mu)=\frac{\fa_n\fb_n}{\fa_n+\fb_n},\quad n\ge0,\]
and the extremal polynomials satisfy
\[\Phi_{2n}(0)=\frac{\fo_n-\fe_n}{\fo_n+\fe_n},\quad n\ge1, \qquad \Phi_{2n+1}(0)=\frac{\fa_n-\fb_n}{\fa_n+\fb_n},\quad n\ge0.\]
\end{lemma}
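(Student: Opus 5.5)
The plan is to block-diagonalize the minimization in \eqref{eq:monicprederrors} along the reciprocal involution, as described before the statement, and then to identify each block's minimum with one of the four half-line problems $\fe_n,\fo_n,\fa_n,\fb_n$ via the substitution $s=2-2\cos\theta=|z-1|^2$.

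\textbf{Step 1: real coefficients and the case $m=0$.} Since $\mu$ is conjugation symmetric, the moments $c_k$ are real. The map $\Phi\mapsto\overline{\Phi(\bar z)}$ preserves both monicity and the integral $\int|\Phi|^2\,d\mu$. By uniqueness of the minimizer (infinite support), $\Phi_m$ therefore has real coefficients. The case $m=0$ is trivial: $\Phi_0=1$, $E_0=\mu(\partial\D)=\tau_0$ by \Cref{lem:bijection}.

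\textbf{Step 2: the splitting.} Fix $m\ge1$ and write $\Phi^*(z)=z^m\overline{\Phi(1/\bar z)}$. Every real polynomial $\Phi$ of degree at most $m$ decomposes uniquely as $\Phi=U+V$ with $U^*=U$ and $V^*=-V$. If $U$ has leading coefficient $u$ and $V$ has leading coefficient $v$, then $U(0)=u$ and $V(0)=-v$. Monicity of $\Phi$ is exactly $u+v=1$, and then $\Phi(0)=u-v$.

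The cross term vanishes. On $\partial\D$ we have $U=z^m\bar U$ and $V=-z^m\bar V$, so $U\bar V=-\bar U V$ pointwise. Hence $\int U\bar V\,d\mu$ equals minus its own conjugate. It is also real, because conjugation symmetry of $\mu$ and realness of the coefficients make it equal to its conjugate. So it is zero, and
\[
\int|\Phi|^2\,d\mu=\int|U|^2\,d\mu+\int|V|^2\,d\mu .
\]

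Normalize $U=u\hat U$ and $V=v\hat V$ with $\hat U,\hat V$ having leading coefficient $1$. Let $a$ be the minimum of $\int|\hat U|^2\,d\mu$ over such reciprocal $\hat U$, and $b$ the corresponding minimum over anti-reciprocal $\hat V$. Then
\[
E_m=\min_{u+v=1}\bigl(u^2a+v^2b\bigr)=\frac{ab}{a+b},
\]
attained at $u=b/(a+b)$, $v=a/(a+b)$. This gives $\Phi_m(0)=(b-a)/(a+b)$. Both $a$ and $b$ are positive because $\mu$ has infinite support.

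\textbf{Step 3: identifying the blocks.} Work on $z=e^{i\theta}$ with $|\theta|<\pi$, which carries $\mu$, and use the identities
\[
z+z^{-1}=2-s,\qquad 4\sin^2\theta=s(4-s),\qquad |z^{1/2}+z^{-1/2}|^2=4-s,\qquad |z^{1/2}-z^{-1/2}|^2=s.
\]

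For $m=2n$:
\begin{itemize}
\item A reciprocal $\hat U$ has $z^{-n}\hat U=Q(s)$ with $Q$ real of degree $n$ and leading coefficient $(-1)^n$. Thus $|\hat U|^2=Q^2$, and $a=\fe_n$.
\item An anti-reciprocal $\hat V$ has $z^{-n}\hat V=(z-z^{-1})\,Q(s)$ with $\deg Q=n-1$ and leading coefficient $\pm1$, since $z^k-z^{-k}$ is divisible by $z-z^{-1}$ with a quotient monic in $z+z^{-1}$. Thus $|\hat V|^2=s(4-s)Q^2$, and $b=\fo_n$.
\end{itemize}

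For $m=2n+1$, factor out $z^{1/2}\pm z^{-1/2}$ from $z^{-m/2}\hat U$ and $z^{-m/2}\hat V$ respectively. This gives $a=\fb_n$ for the reciprocal block and $b=\fa_n$ for the anti-reciprocal block.

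In every case the correspondence between $\hat U$ (or $\hat V$) and $Q$ is a bijection onto the monic real polynomials of the stated degree, up to an irrelevant sign. Because $\omega$ is the common image of $\mu$ and $\sigma$ under the map to $s$, each integral over $\mu$ equals the corresponding integral over $\omega$. Substituting into Step 2 yields the stated formulas for $E_{2n}$, $E_{2n+1}$, $\Phi_{2n}(0)$ and $\Phi_{2n+1}(0)$.

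\textbf{Main obstacle.} The delicate point is the bookkeeping in Step 3. One must verify that the half-integer factorizations are legitimate and that leading coefficients match exactly. This means checking that the map from reciprocal or anti-reciprocal polynomials with leading coefficient $1$ to monic $Q$ is onto, which follows by induction on $z^k\pm z^{-k}$ expressed in $w=z+z^{-1}$. One must also keep track of which block is reciprocal, since that fixes the sign of the numerator $b-a$ in $\Phi_m(0)$.
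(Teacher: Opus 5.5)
Your proposal is correct and follows essentially the paper's proof. It uses the reciprocal/anti-reciprocal splitting with a pointwise vanishing cross term, then reduces each block to a weighted half-line problem against $\omega$. Your symmetric factors $z-z^{-1}$ and $z^{1/2}\pm z^{-1/2}$ differ from the paper's $z^2-1$ and $z\pm1$ only by unimodular powers of $z$. Finally you minimize $u^2a+v^2b$ over $u+v=1$, as the paper does.

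The only step left implicit is the degenerate split $u=0$ or $v=0$. There the normalization $U=u\hat U$ is unavailable, but the block minimum is trivially $0=u^2a$; the paper treats $\beta=0,1$ separately for this reason.
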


\begin{proof}
For $m=0$ the only monic competitor is $\Phi_0=1$, so $E_0(\mu)=\mu(\partial\D)=\tau_0$ and $\Phi_0(0)=1$.
Fix $m\ge1$. We first dispose of existence, uniqueness, and reality,
so that it will suffice to treat real monic $\Phi$.
Let $G$ be the Gram matrix of $1,z,\dots,z^{m-1}$ in $L^2(\mu)$.
Then $G_{jk}=\int z^{j-k}d\mu=c_{k-j}$ and $G$ is Toeplitz.
For any nonzero coefficient vector $a$ we have $a^*Ga=\|\sum \overline{a_j}\,z^j\|_\mu^2>0$,
since a nonzero polynomial of degree less than $m$ has at most $m-1$ zeros and cannot vanish $\mu$-a.e. on the infinite support of $\mu$.
Since $G$ is positive definite, $\int|\Phi|^2d\mu$ is a positive definite quadratic in the lower coefficients of $\Phi$,
hence coercive and strictly convex, and it attains a unique minimum over $\{z^m+\mathcal{P}_{m-1}\}$.
Since $\theta$ is odd and $\sigma$ is symmetric, $c_k=\int_{\partial\D}z^{-k}\,d\mu=\int_\R e^{-ik\theta(x)}\,d\sigma(x)$ is real,
and $c_{-k}=\overline{c_k}=c_k$ for every $k$ since $\mu$ is positive.
The objective $\int|\Phi|^2d\mu=\sum_{j,k}a_j\overline{a_k}\,c_{k-j}$ is therefore unchanged when every coefficient of $\Phi$ is conjugated,
so if $\Phi_m$ minimizes, so does $z\mapsto\overline{\Phi_m(\bar z)}$, and uniqueness identifies them.
It thus suffices to minimize over real monic $\Phi$ of degree $m$.

Set $\Phi^*(z)=z^m\Phi(1/z)$ and split $\Phi=R+I$ with $R=\tfrac12(\Phi+\Phi^*)$ and $I=\tfrac12(\Phi-\Phi^*)$,
so $R^*=R$ (reciprocal) and $I^*=-I$ (anti-reciprocal).
On $|z|=1$ we have $\bar R=z^{-m}R$ and $\bar I=-z^{-m}I$, hence
\[2\,\mathrm{Re}\,\bigl(R\bar I\bigr)=R\bar I+\bar R\,I=-z^{-m}RI+z^{-m}RI=0\]
pointwise on the circle, and the cancellation uses no property of $\mu$.
Thus
\[\int|\Phi|^2d\mu=\int|R|^2d\mu+\int|I|^2d\mu,\]
and for a fixed split of the leading coefficient the two pieces may be minimized independently.

Reciprocity forces zeros at $\mp1$ as follows.
If $m=2n$, then $I(1)=-I(1)$ and $I(-1)=-I(-1)$ give $I=(z^2-1)\widetilde I$ with $\widetilde I$ reciprocal of degree $2n-2$,
while $R$ is unconstrained reciprocal of degree $2n$.
If $m=2n+1$, then $R(-1)=-R(-1)$ gives $R=(z+1)\widetilde R$ and $I(1)=-I(1)$ gives $I=(z-1)\widetilde I$, with $\widetilde R,\widetilde I$ reciprocal of degree $2n$.
The factors $z^2-1$, $z+1$, $z-1$ are those whose squared moduli are the chord weights $s(4-s)$, $4-s$, $s$.
Conversely, for $\widetilde I$ reciprocal of degree $2n-2$ the product $(z^2-1)\widetilde I$ is anti-reciprocal of degree $2n$,
and for $\widetilde R,\widetilde I$ reciprocal of degree $2n$ the products $(z+1)\widetilde R$ and $(z-1)\widetilde I$
are reciprocal and anti-reciprocal of degree $2n+1$,
as follows from
\[\bigl((z^2-1)\widetilde I\bigr)^{*}=z^{2n}(z^{-2}-1)\widetilde I(1/z)=-(z^2-1)\widetilde I\]
and its two analogues.
Moreover, if $R_0$ is reciprocal and $I_0$ is anti-reciprocal of degree $m$ with leading coefficients summing to one,
then $\Phi=R_0+I_0$ is monic of degree $m$ with $\Phi^*=R_0-I_0$, so the splitting recovers $R=R_0$ and $I=I_0$.
The minimizations over the factored classes below therefore range over all potential minimands.

The real polynomials $G$ with $z^{2k}G(1/z)=G(z)$ form a space of dimension $k+1$,
with basis $z^{j}+z^{2k-j}$ for $0\le j\le k$, the element at $j=k$ being $2z^k$. On the circle, with $s=2-2\cos\theta$,
\[z^{-k}\bigl(z^{j}+z^{2k-j}\bigr)=2\cos\bigl((k-j)\theta\bigr)=2T_{k-j}\bigl(1-\tfrac s2\bigr),\qquad 0\le j\le k,\]
where $T_l$ is the Chebyshev polynomial of the first kind, normalized by $T_l(\cos\phi)=\cos(l\phi)$.
Since $2T_0=2$ and, for $l\ge 1$, $2T_l\bigl(1-\tfrac s2\bigr)$ has leading term $(-1)^l s^l$,
the images $2T_{l}\bigl(1-\tfrac s2\bigr)$, $0\le l\le k$, have degrees $0,1,\dots,k$
and form a basis of the real polynomials of degree at most $k$ in $s$.
The assignment $G\mapsto g$ defined by $z^{-k}G(z)=g(s)$ on the circle is therefore a linear bijection onto that space,
and the coefficient of $s^k$ in $g$ is $(-1)^k$ times the coefficient of $z^{2k}$ in $G$.
In particular, as $G$ ranges over the reciprocal polynomials with a prescribed coefficient of $z^{2k}$,
the image $g$ ranges over all real polynomials of degree at most $k$ with the corresponding coefficient of $s^k$.
This applies at $k=n$ to $R$ in the even case and to $\widetilde R$, $\widetilde I$ in the odd case,
and at $k=n-1$ to $\widetilde I$ in the even case.
Moreover $|z-1|^2=s$, $|z+1|^2=4-s$ and $|z^2-1|^2=s(4-s)$ on the circle.
Since the $s$-image of $\mu$ is $\omega$, each circle integral becomes a weighted half-line integral against $\omega$.
Each of the four minima $\fe_n$, $\fo_n$, $\fa_n$, $\fb_n$ is positive, because a nonzero polynomial vanishes on a finite set only,
while the measures $d\omega$, $s\,d\omega$, $(4-s)\,d\omega$ and $s(4-s)\,d\omega$ have infinite support,
the weights vanishing on $[0,4)$ only at $s=0$.

Suppose first that $m$ is even so that \(m=2n\) for $n\ge1$. Let \(\beta\) denote the coefficient of $z^{2n}$ in $R$.
Since $\Phi=R+I$ is monic, the coefficient of $z^{2n}$ in $I$ is $1-\beta$.
Moreover, since $I=(z^2-1)\widetilde I$ and $z^2-1$ is monic, $1-\beta$ is also the coefficient of
$z^{2n-2}$ in the reciprocal polynomial $\widetilde I$.

By the reciprocal-polynomial bijection established above, there are real polynomials $U$ and $V$ such that, on the unit circle,
\[z^{-n}R(z)=U(s) \quad\text{and}\quad z^{-(n-1)}\widetilde I(z)=V(s).\]
As $R$ varies with its $z^{2n}$-coefficient fixed at $\beta$, the polynomial $U$ varies over all real polynomials of degree at most $n$ whose \(s^n\)-coefficient is $(-1)^n\beta$.
Similarly, as $\widetilde I$ varies with its $z^{2n-2}$-coefficient fixed at $1-\beta$, the polynomial \(V\) varies over all real polynomials of
degree at most $n-1$ whose $s^{n-1}$-coefficient is $(-1)^{n-1}(1-\beta)$. The signs are fixed by the bijection and will not affect the minima, since the integrands are squares.

Because \(|z|=1\), we have
\[|R(z)|^2=U(s)^2.\]
Also,
\[|I(z)|^2 = |z^2-1|^2\,|\widetilde I(z)|^2 = s(4-s)V(s)^2.\]
Since the image of $\mu$ under $z\mapsto s=|z-1|^2$ is $\omega$,
it follows that
\[\int_{\partial\D}|R|^2\,d\mu = \int_{[0,4)} U^2\,d\omega \quad\text{and}\quad \int_{\partial\D}|I|^2\,d\mu = \int_{[0,4)} s(4-s)V^2\,d\omega.\]

Fix $\beta$. If $\beta\neq0$, every admissible $U$ can be written as $U=(-1)^n\beta Q$, where $Q$ is monic of degree $n$. Hence
\[\min_U\int_{[0,4)} U^2\,d\omega = \beta^2 \min_{\substack{Q\ \mathrm{monic}\\ \deg Q=n}}\int_{[0,4)} Q^2\,d\omega = \beta^2\fe_n.\]
When $\beta=0$, the admissible class contains $U=0$, so the minimum is $0$,
which is again $\beta^2\fe_n$. In the same way,
\[\min_V\int_{[0,4)} s(4-s)V^2\,d\omega = (1-\beta)^2\fo_n,\]
including the case $\beta=1$, when one may take $V=0$.

The reciprocal and anti-reciprocal parts may be minimized independently once $\beta$ is fixed.
Therefore
\[E_{2n}(\mu) = \min_{\beta\in\mathbb R}\bigl\{\beta^2\fe_n+(1-\beta)^2\fo_n\bigr\}.\]
Completing the square gives
\[\beta^2\fe_n+(1-\beta)^2\fo_n = (\fe_n+\fo_n)\left(\beta-\frac{\fo_n}{\fe_n+\fo_n}\right)^2 + \frac{\fe_n\fo_n}{\fe_n+\fo_n}.\]
Thus the minimum is attained at
\[\beta^\ast=\frac{\fo_n}{\fe_n+\fo_n},\]
and
\[E_{2n}(\mu)=\frac{\fe_n\fo_n}{\fe_n+\fo_n}.\]

It remains to determine the constant term of the extremal polynomial.
Since $R$ is reciprocal of degree $2n$, its constant term equals its $z^{2n}$-coefficient, and hence $R(0)=\beta$.
Likewise, $\widetilde I$ is reciprocal of degree $2n-2$, so $\widetilde I(0)=1-\beta$.
Since $I=(z^2-1)\widetilde I$, we have $I(0)=-\widetilde I(0)=-(1-\beta)$.
Consequently,
\[\Phi_{2n}(0) = R(0)+I(0) = \beta^\ast-(1-\beta^\ast) = \frac{\fo_n-\fe_n}{\fo_n+\fe_n}.\]

Now suppose that $m$ is odd so that $m=2n+1$ for $n\ge0$. In this case the factorizations are
\[R=(z+1)\widetilde R, \qquad I=(z-1)\widetilde I,\]
where $\widetilde R$ and $\widetilde I$ are reciprocal polynomials of degree $2n$.
Let $\beta$ be the coefficient of $z^{2n+1}$ in $R$.
Since $z+1$ is monic, $\beta$ is also the coefficient of $z^{2n}$ in $\widetilde R$.
The corresponding coefficients in $I$ and $\widetilde I$ are $1-\beta$.

Applying the reciprocal-polynomial bijection to $\widetilde R$ and $\widetilde I$, write
\[z^{-n}\widetilde R(z)=\widetilde U(s), \qquad z^{-n}\widetilde I(z)=\widetilde V(s).\]
Here $\widetilde U$ and $\widetilde V$ range over real polynomials of degree at most $n$, with $s^n$-coefficients
$(-1)^n\beta$ and $(-1)^n(1-\beta)$, respectively. Again using that the \(s\)-image of \(\mu\) is \(\omega\), we obtain
\[\int_{\partial\D}|R|^2\,d\mu = \int_{[0,4)} (4-s)\widetilde U(s)^2\,d\omega \quad\text{and}\quad
\int_{\partial\D}|I|^2\,d\mu = \int_{[0,4)} s\,\widetilde V(s)^2\,d\omega.\]

The same homogeneity argument as in the even case, including
$\beta=0$ and $\beta=1$, now gives
\[\min_{\widetilde U}\int (4-s)\widetilde U^2\,d\omega = \beta^2\fb_n \quad\text{and}\quad \min_{\widetilde V} \int s\widetilde V^2\,d\omega = (1-\beta)^2\fa_n.\]
It follows that
\[E_{2n+1}(\mu) = \min_{\beta\in\mathbb R}\bigl\{\beta^2\fb_n+(1-\beta)^2\fa_n\bigr\}.\]
Completing the square,
\[\beta^2\fb_n+(1-\beta)^2\fa_n = (\fa_n+\fb_n)\left(\beta-\frac{\fa_n}{\fa_n+\fb_n}\right)^2 + \frac{\fa_n\fb_n}{\fa_n+\fb_n}.\]
Hence the minimum is attained at
\[\beta^\ast=\frac{\fa_n}{\fa_n+\fb_n},\]
and
\[E_{2n+1}(\mu) = \frac{\fa_n\fb_n}{\fa_n+\fb_n}.\]

Finally, reciprocity gives
\[\widetilde R(0)=\beta,\qquad \widetilde I(0)=1-\beta.\]
Since
\[R=(z+1)\widetilde R,\qquad I=(z-1)\widetilde I,\]
we therefore have
\[R(0)=\widetilde R(0)=\beta, \qquad I(0)=-\widetilde I(0)=-(1-\beta).\]
Evaluating at the minimizing value $\beta^\ast$ yields
\[\Phi_{2n+1}(0) = \beta^\ast-(1-\beta^\ast) = \frac{\fa_n-\fb_n}{\fa_n+\fb_n}.\]
\end{proof}

Two of the four minima, $\fe_n$ and $\fa_n$, are respectively the plain norms $\|q_n\|_\omega^2$ and $\|r_n\|_\eta^2$ by the even/odd reduction;
we call these the \emph{plain} minima.
The other two, $\fo_n$ and $\fb_n$, carry the factor $4-s$;
we call them the \emph{tilted} minima and evaluate them using the Christoffel transform in the following lemma.

\begin{lemma}\label{lem:christoffel}
Let $c>0$ and let $\vartheta$ be a finite positive measure with infinite support contained in $[0,c)$ and monic orthogonal polynomials
$F_k$. Then for every $k\ge0$,
\[\min_{\substack{\deg Q=k\\ Q\ \mathrm{monic}}}\int_{[0,c)} (c-s)\,Q^2\,d\vartheta=\frac{F_{k+1}(c)}{F_k(c)}\,\|F_k\|_\vartheta^2.\]
\end{lemma}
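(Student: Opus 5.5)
The plan is to identify the minimizer explicitly through the Christoffel kernel polynomial of $\vartheta$ at the point $c$. Let $K_k(s,c)=\sum_{j=0}^{k} F_j(s)F_j(c)/\|F_j\|_\vartheta^2$ be the reproducing kernel. The Christoffel--Darboux formula gives
\[(s-c)\,K_k(s,c)=\frac{F_{k+1}(s)F_k(c)-F_k(s)F_{k+1}(c)}{\|F_k\|_\vartheta^2}.\]
Set
\[Q^\ast(s)=\frac{F_{k+1}(s)F_k(c)-F_k(s)F_{k+1}(c)}{F_k(c)\,(s-c)},\]
which is a polynomial because the numerator vanishes at $s=c$. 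It is monic of degree $k$, since the leading term of the numerator is $F_k(c)s^{k+1}$; equivalently $Q^\ast=\|F_k\|_\vartheta^2 K_k(\cdot,c)/F_k(c)$. We need $F_k(c)\neq 0$, and in fact $F_j(c)\neq0$ with sign $+$ for every $j$, because all zeros of $F_j$ lie in the convex hull of $\operatorname{supp}\vartheta\subset[0,c)$.

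\emph{Step 1: orthogonality.} We show that $Q^\ast$ is orthogonal to $\mathcal{P}_{k-1}$ with respect to $(c-s)\,d\vartheta$. For $\deg p\le k-1$ we have
\[\int (c-s)Q^\ast p\,d\vartheta=-\frac{1}{F_k(c)}\int\bigl(F_{k+1}F_k(c)-F_kF_{k+1}(c)\bigr)p\,d\vartheta=0,\]
since both $F_{k+1}$ and $F_k$ are orthogonal to $p$. The measure $(c-s)\,d\vartheta$ is positive with infinite support, because $c-s>0$ on $[0,c)$. Hence $Q^\ast$ is its monic orthogonal polynomial of degree $k$, and therefore it is the unique minimizer. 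This is the standard fact that a monic orthogonal polynomial minimizes the norm, proved by expanding $Q=Q^\ast+p$.

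\emph{Step 2: the minimum value.} The minimum is $\int (c-s)Q^\ast\cdot Q^\ast\,d\vartheta$. By Step 1 we may replace the second factor $Q^\ast$ by any monic polynomial of degree $k$, and we choose $s^k$. Multiplying out,
\[\int (c-s)Q^\ast s^k\,d\vartheta=-\frac{1}{F_k(c)}\int\bigl(F_{k+1}F_k(c)-F_kF_{k+1}(c)\bigr)s^k\,d\vartheta.\]
Here $\int F_{k+1}s^k\,d\vartheta=0$ and $\int F_k s^k\,d\vartheta=\|F_k\|_\vartheta^2$. The value is therefore $F_{k+1}(c)\|F_k\|^2_\vartheta/F_k(c)$, as claimed.

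\emph{Main obstacle.} The only delicate point is the sign and normalization bookkeeping in the Christoffel--Darboux quotient: checking that $Q^\ast$ is genuinely monic and that the sign produced by $(c-s)=-(s-c)$ gives a positive value. Positivity comes out consistently because $F_{k+1}(c),F_k(c)>0$ by the zero-location argument. The case $k=0$ is handled by the same formula: $Q^\ast=1$, and the value is $\int(c-s)\,d\vartheta=F_1(c)\|F_0\|^2$, since $F_1(s)=s-\int s\,d\vartheta/\|F_0\|^2$.
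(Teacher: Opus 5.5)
Your proof is correct and is essentially the paper's argument. Your $Q^\ast$ is exactly the paper's $\hat F_k$, defined by $(c-s)\hat F_k=\frac{F_{k+1}(c)}{F_k(c)}F_k-F_{k+1}$, with the same monic check and the same orthogonality argument against $(c-s)\,d\vartheta$. The only difference is cosmetic and lies in the norm evaluation: you pair $(c-s)Q^\ast$ with $s^k$, whereas the paper pairs $\hat F_k$ with $\frac{F_{k+1}(c)}{F_k(c)}F_k-F_{k+1}$ in $L^2(\vartheta)$, using that $\hat F_k$ has unit $F_k$-coefficient and is orthogonal to $F_{k+1}$. The Christoffel--Darboux framing serves only as motivation and is not actually used.
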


\begin{proof}
Fix $k\geq 0$. Since $\vartheta$ has infinite support in $[0,c)$, we have that all of the zeros of $F_k$ are in $(0,c)$ \cite{Chihara},
and $F_k(c)>0$. Define $\hat F_k$ by
\[(c-s)\hat F_k(s)=\frac{F_{k+1}(c)}{F_k(c)}F_k(s)-F_{k+1}(s).\]
The right-hand side vanishes at $s=c$, so it is divisible by $c-s$.
Moreover, since $F_{k+1}$ is monic and $F_k$ has degree $k$, the right-hand side has leading term $-s^{k+1}$.
Since $c-s$ has leading term $-s$, it follows that $\hat F_k$ is monic of degree $k$. For any polynomial $T$ with $\deg T\le k-1$,
\[\int_{[0,c)} T\,\hat F_k\,(c-s)\,d\vartheta=\frac{F_{k+1}(c)}{F_k(c)}\int_{[0,c)} T F_k\,d\vartheta-\int_{[0,c)} T F_{k+1}\,d\vartheta=0,\]
because $F_k,F_{k+1}\perp T$ in $L^2(\vartheta)$.
Thus $\hat F_k$ is orthogonal to all lower degree polynomials in $L^2((c-s)d\vartheta)$, hence is the minimizer in that space. Its norm is
\[\int_{[0,c)} \hat F_k(c-s)\hat F_k\,d\vartheta = \int_{[0,c)}\hat F_k\Bigl[\tfrac{F_{k+1}(c)}{F_k(c)}F_k-F_{k+1}\Bigr]d\vartheta = \frac{F_{k+1}(c)}{F_k(c)}\|F_k\|^2,\]
since the $F_k$-coefficient of the monic degree-$k$ polynomial $\hat F_k$ is $1$ and $\int\hat F_kF_{k+1}d\vartheta=0$.
\end{proof}

One structural identity of the circle side remains, the factorization of the Toeplitz determinant
through the prediction errors.

\begin{lemma}\label{lem:gram}
Let $\mu$ be a finite positive measure on $\partial\D$ with infinite support,
let $\Phi_m$ be its monic orthogonal polynomials, and let $c_k=\int_{\partial\D}z^{-k}\,d\mu$. Then for every $N\ge0$,
\[D_N=\det\bigl[c_{j-k}\bigr]_{j,k=0}^{N}=\prod_{m=0}^{N}\|\Phi_m\|_\mu^2=\prod_{m=0}^{N}E_m(\mu).\]
\end{lemma}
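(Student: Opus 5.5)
We will prove \Cref{lem:gram} with the classical Gram-determinant argument: triangular changes of basis leave the determinant of $G$ unchanged and reduce $G$ to a diagonal matrix whose entries are the prediction errors.

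\emph{Step 1 (identify the matrix).} Let $G^{(N)}$ be the Gram matrix of $1,z,\dots,z^{N}$ in $L^2(\mu)$, with entries $G_{jk}=\langle z^k,z^j\rangle_\mu=\int z^k\overline{z^j}\,d\mu=c_{j-k}$. Depending on the inner-product convention this matrix is $[c_{j-k}]$ or its transpose $[c_{k-j}]$, and a transpose does not change the determinant. So $D_N=\det G^{(N)}$.

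\emph{Step 2 (change of basis).} Replace the basis $1,z,\dots,z^N$ by $\Phi_0,\Phi_1,\dots,\Phi_N$. Each $\Phi_m$ is monic of degree $m$, so $\Phi_m=\sum_{k\le m}u_{mk}z^k$ with $u_{mm}=1$, and the change-of-basis matrix $U=(u_{mk})$ is lower unitriangular. The Gram matrix of the new family is $U G^{(N)} U^*$ (or its transpose, according to convention), and $\det U=1$. Hence $D_N$ equals the determinant of the Gram matrix of $\Phi_0,\dots,\Phi_N$.

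\emph{Step 3 (orthogonality and minimality).} The minimizer $\Phi_m$ of \Cref{eq:monicprederrors} is orthogonal to $\mathcal P_{m-1}$. This is the first-order condition: $\int|\Phi_m+\epsilon p|^2\,d\mu$ is minimal at $\epsilon=0$ for every $p\in\mathcal P_{m-1}$ and every complex $\epsilon$, which forces $\langle \Phi_m,p\rangle=0$. So the new Gram matrix is diagonal with entries $\|\Phi_m\|_\mu^2=E_m(\mu)$, and therefore
\[
D_N=\prod_{m=0}^{N}\|\Phi_m\|_\mu^2=\prod_{m=0}^{N}E_m(\mu).
\]
Existence of the minimizers and positivity of the $E_m$ follow from positive definiteness of $G$, exactly as argued at the opening of the proof of \Cref{lem:split}; that argument uses only that $\mu$ has infinite support.

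Alternative route: Schur complements. The ratio $\det G^{(m)}/\det G^{(m-1)}$ is the squared distance from $z^m$ to $\mathcal P_{m-1}$, which is $E_m$. Induction on $m$ then gives the same product.

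\emph{Main obstacle.} The only delicate point is bookkeeping, namely the index and conjugation conventions ($c_{j-k}$ versus $c_{k-j}$, and $UGU^*$ versus $\bar U G U^{T}$). Both conventions give a matrix and its transpose, and $|\det U|=1$ in either case, so the determinant is unaffected.
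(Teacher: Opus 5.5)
Your proposal is correct and follows essentially the same route as the paper: you identify $[c_{j-k}]$ as (the transpose of) the Gram matrix of $1,z,\dots,z^N$, make the unitriangular change of basis to $\Phi_0,\dots,\Phi_N$, and read off the diagonal entries $\|\Phi_m\|_\mu^2=E_m(\mu)$. The only cosmetic difference is that you get orthogonality of the minimizer from the first-order variational condition. The paper instead identifies $E_m$ as the squared distance from $z^m$ to $\mathcal P_{m-1}$, attained at the projection residual.
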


\begin{proof}
The $(j,k)$ entry is $c_{j-k}=\int_{\partial\D}z^{k-j}\,d\mu=\langle z^k,z^j\rangle_\mu$,
so the matrix is the transpose of the Gram matrix of $1,z,\dots,z^N$ in $L^2(\mu)$ and thus has the same determinant.
The change of basis from $(z^m)_{m=0}^{N}$ to $(\Phi_m)_{m=0}^{N}$ is unit triangular,
each $\Phi_m$ being monic of degree $m$, so it preserves the Gram determinant,
and the Gram matrix of the $\Phi_m$ is diagonal with entries $\|\Phi_m\|_\mu^2$ by orthogonality.
Finally $E_m(\mu)=\|\Phi_m\|_\mu^2$, since the minimum of $\int|\Phi|^2\,d\mu$ over monic $\Phi$ of degree $m$
is the squared distance in $L^2(\mu)$ from $z^m$ to the polynomials of degree less than $m$,
attained at the projection residual, which is monic and orthogonal to all lower degrees, hence equals $\Phi_m$.
\end{proof}

With the plain minima identified, the tilted minima evaluated, and the determinant factored, we can now prove \Cref{thm:main}.

\begin{proof}[Proof of \Cref{thm:main}]
The four minima of \Cref{lem:split} reduce to data of $\sigma$ at the edge. By \Cref{eq:evenodd},
$\fe_n=\|q_n\|_\omega^2=\|P_{2n}\|_\sigma^2$ and $\fa_n=\|r_n\|_\eta^2=\|P_{2n+1}\|_\sigma^2$.
By \Cref{lem:christoffel} applied with $c=4$, once to $\vartheta=\eta$ at degree $n-1$ and once to $\vartheta=\omega$ at degree $n$, and using \Cref{eq:edgevalues},
\[\fo_n=\frac{r_n(4)}{r_{n-1}(4)}\,\|r_{n-1}\|_\eta^2 = \frac{P_{2n+1}(2)}{P_{2n-1}(2)}\,\|P_{2n-1}\|_\sigma^2,
\qquad \fb_n=\frac{q_{n+1}(4)}{q_n(4)}\,\|q_n\|_\omega^2 = \frac{P_{2n+2}(2)}{P_{2n}(2)}\,\|P_{2n}\|_\sigma^2 .\]
Define $\rho_m$ as the ratio of the two minima at level $m$, plain over tilted.
Then, with $t_m$ as in \Cref{eq:tdef} and $\|P_m\|_\sigma^2=A_m^2\|P_{m-1}\|_\sigma^2$ \cite{Chihara},
\[\rho_{2n}=\frac{\fe_n}{\fo_n} =\frac{\|P_{2n}\|_\sigma^2}{\|P_{2n-1}\|_\sigma^2}\cdot\frac{P_{2n-1}(2)}{P_{2n+1}(2)} = \frac{A_{2n}^2}{t_{2n}t_{2n-1}},
\quad \rho_{2n+1}=\frac{\fa_n}{\fb_n} =\frac{\|P_{2n+1}\|_\sigma^2}{\|P_{2n}\|_\sigma^2}\cdot\frac{P_{2n}(2)}{P_{2n+2}(2)} = \frac{A_{2n+1}^2}{t_{2n+1}t_{2n}},\]
so uniformly $\rho_m=A_m^2/(t_mt_{m-1})$ for $m\ge1$.
The crucial point here is that, by \Cref{eq:tdef}, $A_m^2/t_{m-1}=2-t_m$,
so the three edge quantities enter only through $t_m$, and $\rho_m=\frac{2-t_m}{t_m}$.
By \Cref{lem:split}, $E_m$ equals the numerator minimum divided by $1+\rho_m$, where the numerator minimum is $\fe_n=\|P_{2n}\|_\sigma^2$ for $m=2n$
and $\fa_n=\|P_{2n+1}\|_\sigma^2$ for $m=2n+1$. In both cases $E_m=\|P_m\|^2_\sigma\,t_m/2$, which is \Cref{eq:mainE} for the prediction errors,
the case $m=0$ being $E_0=\tau_0$ with $t_0=2$.
For the extremal values, by \Cref{lem:split} and $\rho_m=\frac{2-t_m}{t_m}$,
\[\Phi_{2n}(0)=\frac{1-\rho_{2n}}{1+\rho_{2n}}=t_{2n}-1, \qquad \Phi_{2n+1}(0)=\frac{\rho_{2n+1}-1}{\rho_{2n+1}+1}=1-t_{2n+1},\]
which is \Cref{eq:mainE} for $\Phi_m(0)$.
The margin formula follows either from $1-\Phi_m(0)^2=t_m(2-t_m)$ together with $(2-t_m)t_{m-1}=A_m^2$, or directly from the ratio of the norm formulas.
Finally \Cref{lem:gram} gives $D_N=\prod_{m\le N}E_m$, while $\prod_{m\le N}t_m=P_{N+1}(2)$ telescopes
from \Cref{eq:tdef}, giving \Cref{eq:maindet}.
\end{proof}

\begin{remark}\label{rem:secondkind}
In Simon's convention $\alpha_{m-1}=-\overline{\Phi_m(0)}$ \cite{Simon1}, so the Verblunsky coefficients of the lift are $\alpha_{m-1}=(-1)^{m+1}(t_m-1)$,
alternating in sign with magnitudes $t_m-1$ whenever $t_m>1$. The relations of \Cref{thm:main} are stated through $\Phi_m(0)$ to remain convention-free.
\end{remark}

\section{Monotonicity and Tur\'an-type inequalities}\label{sec:mono}

In this section we show that decay of the recurrence coefficients of $\sigma$ forces the $t_m$ to increase past $m=1$,
and that the increase is a Tur\'an-type inequality for $m\mapsto P_m(2)$.

\begin{lemma}\label{lem:tbounds}
For every $m\ge0$ we have $2-R\le t_m\le 2$, with $t_m<2$ for $m\ge1$.
Moreover $2-t_m\le A_m^2/(2-R)$ for every $m\ge1$, so $t_m\to2$ whenever $A_m^2\to0$.
\end{lemma}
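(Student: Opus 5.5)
The lower bound comes from the fact that all zeros of $P_m$ lie in $[-R,R]$. The upper bound and the estimate on $2-t_m$ then follow from the recursion in \Cref{eq:tdef}.

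\emph{Lower bound.} Write $P_m(x)=\prod_{k=1}^m(x-x_{m,k})$ with every zero $x_{m,k}\in(-R,R)$; this holds because $\operatorname{supp}\sigma\subset[-R,R]$ and zeros of orthogonal polynomials lie in the interior of the convex hull of the support. Interlacing of the zeros of $P_m$ and $P_{m+1}$ alone does not seem enough to bound the ratio. Instead use the partial-fraction expansion
\[
\frac{P_m(x)}{P_{m+1}(x)}=\sum_k \frac{w_k}{x-y_k},\qquad w_k>0,\quad \sum_k w_k=1,
\]
where the $y_k\in(-R,R)$ are the zeros of $P_{m+1}$. The weights are positive by interlacing; equivalently, this is the Gauss quadrature representation of the $m$-th associated Stieltjes transform. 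Evaluating at $x=2$, each term satisfies $1/(2-y_k)<1/(2-R)$, so $1/t_m<1/(2-R)$, that is, $t_m>2-R$. The case $m=0$ is immediate, since $t_0=2\ge 2-R$.

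\emph{Upper bound.} Positivity of $t_{m-1}$ (from $P_m(2)>0$) and $A_m^2>0$ give $t_m=2-A_m^2/t_{m-1}<2$ for $m\ge1$, while $t_0=2$.

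\emph{Estimate on $2-t_m$.} The recursion gives $2-t_m=A_m^2/t_{m-1}$, and the lower bound $t_{m-1}\ge 2-R$ yields $2-t_m\le A_m^2/(2-R)$. Combined with $t_m\le2$, this shows $t_m\to2$ whenever $A_m^2\to0$.

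The main obstacle is the lower bound. The step to verify is positivity and unit total mass of the weights $w_k$. Total mass $1$ follows because both polynomials are monic with degrees differing by one, so $P_m/P_{m+1}\sim 1/x$ as $x\to\infty$. Positivity follows from the residues $P_m(y_k)/P_{m+1}'(y_k)>0$, a consequence of interlacing and the Christoffel–Darboux formula.

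An alternative route avoids interlacing. Since $2-R>0$, each factor satisfies $(2-x_{m+1,k})/(2-x_{m,k})$, but a term-by-term comparison is awkward. A cleaner argument is by induction through the continued fraction. Starting from the Jacobi matrix bound $A_j\le R$ is insufficient in general. The partial-fraction argument is therefore the one I would commit to.
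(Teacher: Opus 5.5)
Your proof is correct. The upper bound $t_m<2$ and the estimate $2-t_m=A_m^2/t_{m-1}\le A_m^2/(2-R)$ are exactly the paper's; only the lower bound differs.

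You write $1/t_m=P_m(2)/P_{m+1}(2)=\sum_k w_k/(2-y_k)$, an average of the numbers $1/(2-y_k)$ against a probability measure on the zeros of $P_{m+1}$, and then bound that average. A cleaner way to see the weights: $w_k=\lambda_k\,p_m(y_k)^2$, where the $\lambda_k$ are the Gauss--Christoffel numbers. Positivity is then immediate, and $\sum_k w_k=\int p_m^2\,d\sigma=1$ follows from exactness of the quadrature. So this is a Christoffel-number representation rather than an "associated" Stieltjes transform, though your residue argument also works.

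The paper instead uses exactly the term-by-term comparison you set aside as awkward. With the zeros ordered, interlacing gives $z_i^{(m+1)}<z_i^{(m)}$ for $1\le i\le m$, so
\[t_m=\bigl(2-z_{m+1}^{(m+1)}\bigr)\prod_{i=1}^m\frac{2-z_i^{(m+1)}}{2-z_i^{(m)}}\ \ge\ 2-R .\]
Each of the $m$ paired ratios exceeds $1$, and the unpaired factor is at least $2-R$. Your remark that interlacing alone does not seem to bound the ratio is therefore mistaken; interlacing does it in one line.

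Your route needs a bit more machinery: residue positivity, which itself comes from interlacing or Christoffel--Darboux, plus the unit-mass computation. In return it identifies $1/t_m$ as the Stieltjes transform at $x=2$ of the spectral measure of the last basis vector of the truncated Jacobi matrix, so the bound becomes a plain averaging statement that does not depend on ordering the zeros.
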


\begin{proof}
The case $m=0$ is immediate from $t_0=2$. For $m\ge1$, since the zeros of $P_j$ are in $[-R,R]$,
we have $P_j(2)>0$ for every $j$, and hence $t_j>0$.
Therefore \Cref{eq:tdef} gives
\[t_m=2-\frac{A_m^2}{t_{m-1}}< 2,\]
where strict inequality holds since $A_m^2>0$.
For the lower bound, let $z_1^{(m)}\le\cdots\le z_m^{(m)}$ be the zeros of $P_m$, all in $[-R,R]$, interlacing those of $P_{m+1}$.
Writing $P_m(2)=\prod_i(2-z_i^{(m)})$, we have
\[t_m=\frac{\prod_{i=1}^{m+1}\bigl(2-z_i^{(m+1)}\bigr)}{\prod_{i=1}^{m}\bigl(2-z_i^{(m)}\bigr)}
=\bigl(2-z_{m+1}^{(m+1)}\bigr)\prod_{i=1}^m\frac{2-z_i^{(m+1)}}{2-z_i^{(m)}}\ \ge\ 2-R,\]
since interlacing gives $z_i^{(m+1)}\le z_i^{(m)}$ for $1\le i\le m$ and the last factor is at least
$2-R$. The final claim follows from $2-t_m=A_m^2/t_{m-1}\le A_m^2/(2-R)$.
\end{proof}

\begin{maintheorem}\label{thm:mono}
Suppose the recurrence coefficients of $\sigma$ are strictly decreasing, $A_1^2>A_2^2>\cdots$, and that
\begin{equation}\label{eq:basecond}
A_2^2\ <\ A_1^2\Bigl(1-\frac{A_1^2}{4}\Bigr).
\end{equation}
Then,
\begin{enumerate}
\item[(i)] For $m\geq 2$, $t_{m-1}<t_m$;
\item[(ii)] $m\mapsto P_m(2)$ is strictly log-convex from the second index on, that is
$P_{m+1}(2)\,P_{m-1}(2)>P_m(2)^2$ for $m\ge2$, while at the first index $t_1<t_0=2$ reverses this to $P_2(2)\,P_0(2)<P_1(2)^2$;
\item[(iii)] every prediction margin of the lift past the first strictly exceeds the corresponding recurrence coefficient,
\[1-\Phi_m(0)^2=A_m^2\,\frac{t_m}{t_{m-1}}>A_m^2,\qquad m\ge2,\]
with deviations $\log\bigl(t_m/t_{m-1}\bigr)$ positive and summable; if moreover $A_m^2\to0$, their total is
\[\sum_{m\ge2}\log\frac{t_m}{t_{m-1}} = \log\frac{4}{4-A_1^2}.\]
\end{enumerate}
\end{maintheorem}

Informally, \Cref{eq:basecond} requires the first drop, from $A_1^2$ to $A_2^2$,
to clear a relative correction of size $A_1^2/4$.
This is the base step of the induction behind (i),
and for the Lommel ensembles it will be verified with room to spare in \Cref{thm:lommel}(ii).

\begin{proof}
From \Cref{eq:tdef},
\[t_{m+1}-t_m=\frac{A_m^2}{t_{m-1}}-\frac{A_{m+1}^2}{t_m}=\frac{A_m^2t_m-A_{m+1}^2t_{m-1}}{t_{m-1}t_m},\] with $t_{m-1}t_m>0$ by \Cref{lem:tbounds}.
Now if $t_m\ge t_{m-1}$ then $A_m^2t_m\ge A_m^2t_{m-1}>A_{m+1}^2t_{m-1}$, so strict increase propagates by induction given the base step $t_2>t_1$,
which follows directly from \Cref{eq:basecond} since $A_2^2=2t_1-t_1t_2$ and $A_1^2=4-2t_1$ by \Cref{eq:tdef}.

Also from \Cref{eq:tdef}, the result in (ii) restates $t_m>t_{m-1}$ for $m\ge2$, with $t_1<t_0=2$ giving the reversed inequality $P_2(2)\,P_0(2)<P_1(2)^2$ at the first step.

The first part of (iii) follows from \Cref{eq:mainmargin}, with
\[\sum_{m=2}^N\log\frac{t_m}{t_{m-1}}=\log\frac{t_N}{t_1}\leq\log\frac{2}{t_1},\]
so the positive deviations are summable.
If $A_m^2\rightarrow 0$, then $t_N\rightarrow 2$ by \Cref{lem:tbounds}, and the asserted total follows from $t_1=2-A_1^2/2$.
\end{proof}

\section{Lommel ensembles}\label{sec:lommel}

We now choose the measure $\sigma$ so that its orthogonal polynomials are the Lommel family.
The Lommel polynomials are attached to the Bessel function of the first kind $J_\nu$. Their orthogonality measure is purely atomic, with atoms at the
reciprocals of the zeros of $J_\nu$, the recurrence coefficients rational in the degree.
Running these facts through \Cref{thm:main} makes each output explicit,
the norms in Gamma closed form, the edge limit a Bessel value, and the finite prediction quantities exact through the edge ratio $t_m$.
Throughout this section, implied constants in the $O$ notation may depend on $\nu$ and $\beta$.

For $\nu>-1$ and $0<\beta<2\sjz$, the \emph{Lommel ensemble} $\Lom_{\nu,\beta}$ is the purely atomic symmetric
measure placing mass $(\beta/j_{\nu,k})^2$ at each of the points $\pm\beta/j_{\nu,k}$, $k\ge1$;
its support is the closure $\{0\}\cup\{\pm\beta/j_{\nu,k}\colon k\ge1\}$, and the origin carries no mass.

The constraint on $\beta$ puts the support inside $(-2,2)$. The total mass is, by Rayleigh's identity
$\sum_k j_{\nu,k}^{-2}=\tfrac1{4(\nu+1)}$ \cite{Watson},
\begin{equation}\label{eq:tau0}
\tau_0=2\beta^2\sum_{k\ge1}j_{\nu,k}^{-2}=\frac{\beta^2}{2(\nu+1)} .
\end{equation}
The orthogonality of the modified Lommel polynomials for the discrete measure with masses
proportional to $j_{\nu,k}^{-2}$ at $\pm1/j_{\nu,k}$ goes back to Dickinson \cite{Dickinson} and Dickinson, Pollak and Wannier \cite{DickinsonPollakWannier}.
Maki \cite{Maki} treated the companion problem in the order variable, constructing the essentially unique measure
that orthogonalizes the family as polynomials in $\nu$ at fixed argument, with atoms at the $\nu$-zeros of the Bessel function.
Chihara \cite[Ch.~VI, \S 6]{Chihara} and Ismail's monograph \cite[\S 6.5]{Ismail} are the standard modern references. The family has been extended in three directions,
to the $q$-Lommel polynomials of the Hahn--Exton $q$-Bessel function \cite{Koelink},
to orthogonal polynomials whose orthogonality measure has atoms at reciprocals of zeros of Coulomb wave functions \cite{StampachStovicek},
and to orthogonal polynomials whose orthogonality measure has atoms at reciprocals of zeros of the derivative $J_\nu'$ \cite{ChungLeePark}.
The lift applies to any of these families whose measure is symmetric with compact support inside $(-2,2)$,
since \Cref{thm:main} asks nothing more;
an analogue of \Cref{thm:lommel} would further require closed recurrence data and a Hurwitz-type edge limit,
but we have not pursued these calculations.

The masses $(\beta/j_{\nu,k})^2$ are proportional to $j_{\nu,k}^{-2}$,
so $\Lom_{\nu,\beta}$ is a constant multiple of the $\beta$-dilation of Dickinson's measure
and its monic orthogonal polynomials are the dilated modified Lommel polynomials.
The masses are also the squares of the atom locations, and $x^2=|1-z|^2$ along the lift,
so each atom of $\Lambda\Lom_{\nu,\beta}$ carries the Christoffel weight $|1-z|^2$ evaluated there.
It is tempting to conclude that the lift is the Christoffel transform at $z=1$ of some finite measure,
which would deliver the Verblunsky coefficients of the lift from those of the base measure in one step. It is not.
Each atom contributes $1$ to $\int_{\partial\D}|1-z|^{-2}\,d\bigl(\Lambda\Lom_{\nu,\beta}\bigr)$,
so the integral diverges and no finite base measure exists.

Normalizations differ across the Lommel literature, so we state our conventions in full,
and we prove the identification of the orthogonality measure on the full range $\nu>-1$ in \Cref{prop:lommelid}.
The precise normalization is that of \cite{DickinsonPollakWannier}, whose modified Lommel polynomials $L_n^{(s)}$ are monic and satisfy
\begin{equation}\label{eq:dpwrec}
L_0^{(s)}=1,\qquad L_1^{(s)}(x)=x,\qquad
L_{n+1}^{(s)}(x)=x\,L_n^{(s)}(x)-\frac{1}{4(s+n)(s+n+1)}\,L_{n-1}^{(s)}(x),\quad n\ge1,
\end{equation}
with $L_n^{(s)}(x)=R_{n,s+1}(1/x)\,\Gamma(s+1)/\bigl[2^n\Gamma(s+n+1)\bigr]$ in terms of Watson's Lommel polynomials $R_{n,\nu}$ \cite{Watson}.
The coefficient in \Cref{eq:dpwrec} is positive for every $n\ge1$ precisely when $s>-1$.
Dickinson, Pollak and Wannier \cite{DickinsonPollakWannier} display the recurrence for $n\ge0$ with $L_{-1}^{(s)}=0$,
and the $n=0$ instance forces their standing restriction $s>0$;
started at $n\ge1$ as above, the family is defined, with positive and summable recurrence coefficients,
for every $s>-1$, and agrees with theirs when $s>0$.
Their orthogonality relation, equation (25) of \cite{DickinsonPollakWannier}, stated for $s>0$,
places masses proportional to $j_{s,k}^{-2}$ at the points $\pm1/j_{s,k}$;
\Cref{prop:lommelid} recovers it, up to the overall normalization, and extends it to every $s>-1$.

Take $s=\nu$, write $a_m^2=1/[4(\nu+m)(\nu+m+1)]$ for the coefficients in \Cref{eq:dpwrec},
and pass to scale $\beta$ through the monic dilation
\begin{equation}\label{eq:monicdilation}
P_m(x)=\beta^m\,L_m^{(\nu)}(x/\beta),\qquad m\ge0.
\end{equation}
Substituting $x/\beta$ for the argument in \Cref{eq:dpwrec} and multiplying by $\beta^{m+1}$ gives
\[P_{m+1}(x)=x\,P_m(x)-\beta^2a_m^2\,P_{m-1}(x)\]
for $m\ge1$, so the support dilation $y\mapsto\beta y$ multiplies each recurrence coefficient by $\beta^2$.
Hurwitz's theorem \cite{Hurwitz}, in Watson's formulation \cite[\S 9.65\,(1)]{Watson}
with the branch factors divided out, states that for every real $s$,
\begin{equation}\label{eq:watsonhurwitz}
\lim_{m\to\infty}\frac{(z/2)^{m}\,R_{m,s+1}(z)}{\Gamma(s+m+1)}=\Bigl(\frac z2\Bigr)^{-s}J_s(z),
\end{equation}
uniformly on bounded subsets of the $z$-plane;
both sides are even entire functions of $z$, the right side through its power series, so no branch convention enters.
The foregoing discussion establishes, for every $m\ge0$ and $x\ne0$, the identity
\begin{equation}\label{eq:monictransport}
\frac{P_m(x)}{x^m}=\Gamma(\nu+1)\,\frac{(z/2)^{m}\,R_{m,\nu+1}(z)}{\Gamma(\nu+m+1)},\qquad z=\frac{\beta}{x},
\end{equation}
so \Cref{eq:watsonhurwitz} at $s=\nu$, applied to \Cref{eq:monictransport} together with the canonical product
\[J_\nu(z)=\frac{(z/2)^\nu}{\Gamma(\nu+1)}\prod_{k\ge1}\bigl(1-z^2/j_{\nu,k}^2\bigr)\]
\cite[\S 15.41]{Watson}, yields
\begin{equation}\label{eq:hurwitz}
\lim_{m\to\infty}\frac{P_m(x)}{x^m} = \prod_{k\ge1}\Bigl(1-\frac{\beta^2}{j_{\nu,k}^2x^2}\Bigr) = \Gamma(\nu+1)\Bigl(\frac{2x}{\beta}\Bigr)^{\nu} J_\nu\Bigl(\frac{\beta}{x}\Bigr),
\end{equation}
the first equality locally uniformly on compact subsets of $\{x\in\C\colon x\ne0\}$;
the powers of $x$ cancel between the two factors of the third member,
so that member is branch independent and equals the product for complex $x$ as well.

\begin{proposition}\label{prop:lommelid}
Let $\nu>-1$ and $0<\beta<2\sjz$.
The family \Cref{eq:monicdilation} is orthogonal with respect to $\Lom_{\nu,\beta}$,
and $\Lom_{\nu,\beta}$ is the unique positive measure with total mass $\tau_0$ having this property.
In particular the monic orthogonal polynomials of $\Lom_{\nu,\beta}$ satisfy the three-term recurrence with coefficients
\begin{equation}\label{eq:lommelA2}
A_m^2\bigl(\Lom_{\nu,\beta}\bigr)=\frac{\beta^2}{4(\nu+m)(\nu+m+1)},\qquad m\ge1,
\end{equation}
strictly decreasing in $m$, with norms
\begin{equation}\label{eq:lommelnorms}
\|P_m\|^2_{\Lom_{\nu,\beta}} = \tau_0\prod_{j=1}^m A_j^2 = \frac{\beta^2}{2(\nu+1)}\,\Bigl(\frac{\beta}{2}\Bigr)^{2m}\,
\frac{\Gamma(\nu+1)\,\Gamma(\nu+2)}{\Gamma(\nu+m+1)\,\Gamma(\nu+m+2)} .
\end{equation}
The support of $\Lom_{\nu,\beta}$ is $\{0\}\cup\{\pm\beta/j_{\nu,k}\colon k\ge1\}$, and the origin carries no mass.
\end{proposition}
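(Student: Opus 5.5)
Since $A_m^2>0$ for $m\ge1$ when $\nu>-1$ and the $A_m^2$ are bounded and summable, Favard's theorem gives a positive measure with total mass $\tau_0$ orthogonalizing the family \Cref{eq:monicdilation}. The Jacobi matrix is a bounded (indeed compact) operator, so this measure is unique: the moment problem is determinate. It therefore suffices to show that $\Lom_{\nu,\beta}$ orthogonalizes the $P_m$. Equivalently, its Stieltjes transform must equal the continued fraction (the $m$-function) of the recurrence.

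\textbf{Step 1: the $m$-function.} For the recurrence with coefficients $\beta^2a_m^2$ and first coefficient chosen to give total mass $\tau_0$, the $m$-function is
\[
m(x)=\int\frac{d\sigma(y)}{x-y}=\lim_{n\to\infty}\frac{\tau_0\,P^{(1)}_{n-1}(x)}{P_n(x)},
\]
where $P^{(1)}$ denotes the associated (numerator) polynomials. The limit holds locally uniformly off the spectrum by Markov's theorem, since the support is compact. The associated polynomials of the Lommel family are again Lommel polynomials with $\nu$ shifted to $\nu+1$; this is visible from \Cref{eq:dpwrec}, because dropping the first step replaces $s$ by $s+1$. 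Apply \Cref{eq:hurwitz} to numerator and denominator, at parameters $\nu+1$ and $\nu$ respectively. The result is a ratio of the form
\[
m(x)=\frac{C}{x}\,\frac{J_{\nu+1}(\beta/x)}{J_\nu(\beta/x)}\cdot\frac{\beta}{x}
\]
up to Gamma and power constants. I will track these constants so that $x\,m(x)\to\tau_0$ as $x\to\infty$, using $J_{\nu+1}(z)/J_\nu(z)\sim z/(2(\nu+1))$.

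\textbf{Step 2: the partial fraction.} The Mittag-Leffler expansion
\[
\frac{J_{\nu+1}(z)}{J_\nu(z)}=\sum_{k\ge1}\frac{2z}{j_{\nu,k}^2-z^2}
\]
holds for $\nu>-1$ (Watson \S15.41); it follows from the logarithmic derivative of the canonical product together with $J_{\nu+1}=\frac{\nu}{z}J_\nu-J_\nu'$. Substituting $z=\beta/x$ rewrites $m(x)$ as
\[
\sum_k \frac{\text{mass}_k}{2}\Bigl(\frac{1}{x-y_k}+\frac{1}{x+y_k}\Bigr),\qquad y_k=\beta/j_{\nu,k}.
\]
I expect each atom at $\pm y_k$ to come out with mass $y_k^2=(\beta/j_{\nu,k})^2$, and the total then matches \Cref{eq:tau0}. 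By Stieltjes inversion (or simply uniqueness of the Stieltjes transform), the orthogonality measure is $\Lom_{\nu,\beta}$.

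\textbf{Step 3: remaining claims.} The coefficients \Cref{eq:lommelA2} are $\beta^2a_m^2$, as computed from the dilation. Strict decrease is clear, since $(\nu+m)(\nu+m+1)$ increases for $\nu>-1$ and $m\ge1$. The norm formula \Cref{eq:lommelnorms} is the product $\tau_0\prod_j A_j^2$, which telescopes to the stated Gamma ratio via $\prod_{j=1}^m(\nu+j)=\Gamma(\nu+m+1)/\Gamma(\nu+1)$. The support claim follows because $j_{\nu,k}\to\infty$, so the atoms accumulate only at $0$. The hypothesis $\beta<2j_{\nu,1}$ places them in $(-2,2)$, and $0$ is not an atom.

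\textbf{Main obstacle.} The main obstacle is Step 1. One must correctly identify the associated polynomials as the shifted Lommel family with the right normalization. One must also justify the Markov/Hurwitz limit, which converges away from the zeros, uniformly on compacts of $\C\setminus[-R,R]$. The lower range $-1<\nu\le0$ needs particular care, since there \cite{DickinsonPollakWannier} does not apply; the argument above never uses $s>0$, only $A_m^2>0$, which holds on the full range. As a fallback for the constants, I would check the first moment directly: $\int x^2\,d\Lom_{\nu,\beta}=\tau_0A_1^2$, using Rayleigh's second sum $\sum j_{\nu,k}^{-4}=1/(16(\nu+1)^2(\nu+2))$.
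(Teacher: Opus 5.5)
Your proposal follows the paper's proof essentially step for step: Favard's theorem plus determinacy from the bounded coefficients, identification of the numerator polynomials as the dilated Lommel family at parameter $\nu+1$, Markov's theorem combined with the Hurwitz limit at parameters $\nu+1$ and $\nu$, the partial-fraction expansion of $J_{\nu+1}/J_\nu$, and uniqueness of the Stieltjes transform. One correction to your Step 1 display: the factor should be $x/\beta$, not $\beta/x$. The $1/x$ from the degree mismatch cancels against the $2/z=2x/\beta$ coming from the ratio of canonical-product prefactors, so the limit is exactly $\beta\,J_{\nu+1}(\beta/x)/J_\nu(\beta/x)$; normalizing by $x\,m(x)\to\tau_0$ can fix a multiplicative constant but would not repair a wrong power of $x$.
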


\begin{proof}
The coefficients $\beta^2a_m^2$ are positive for $m\ge1$ and bounded,
so by Favard's theorem the family \Cref{eq:monicdilation} is orthogonal with respect to a positive measure $\mu$
with total mass $\tau_0$, and the boundedness of the coefficients places the support of $\mu$ in a compact set
and makes $\mu$ the unique measure with its moments \cite[Chs.~II and IV]{Chihara}.
The second kind polynomials $Q_m(x)=\int\bigl(P_m(x)-P_m(t)\bigr)(x-t)^{-1}\,d\mu(t)$
satisfy the same recurrence with $Q_0=0$ and $Q_1=\tau_0$,
and $a_{m+1}^2$ is the coefficient at index $m$ in the parameter $\nu+1$ recurrence,
so induction gives $Q_{m+1}(x)=\tau_0\,\beta^{m}L_m^{(\nu+1)}(x/\beta)$ for $m\ge0$.
By Markov's theorem \cite[\S 2.6]{Ismail}, for $x\in\C\setminus\R$,
\[\int_\R\frac{d\mu(t)}{x-t}=\lim_{m\to\infty}\frac{Q_{m+1}(x)}{P_{m+1}(x)}
=\frac{\tau_0}{x}\,\frac{\prod_{k\ge1}\bigl(1-\beta^2/(j_{\nu+1,k}^2x^2)\bigr)}{\prod_{k\ge1}\bigl(1-\beta^2/(j_{\nu,k}^2x^2)\bigr)}
=\beta\,\frac{J_{\nu+1}(\beta/x)}{J_\nu(\beta/x)},\]
the second equality by \Cref{eq:hurwitz} at parameters $\nu+1$ and $\nu$,
whose denominator limit is zero free off the real axis,
and the third by the canonical products of $J_{\nu+1}$ and $J_\nu$ together with $\tau_0=\beta^2/(2(\nu+1))$.
Logarithmic differentiation of the canonical product,
with the recurrence $J_\nu'(w)=\tfrac{\nu}{w}J_\nu(w)-J_{\nu+1}(w)$, gives
\[\frac{J_{\nu+1}(w)}{J_\nu(w)}=\sum_{k\ge1}\frac{2w}{j_{\nu,k}^2-w^2},\]
locally uniformly off the zeros, and the substitution $w=\beta/x$ turns the Stieltjes transform of $\mu$ into
\[\int_\R\frac{d\mu(t)}{x-t} = \sum_{k\ge1}\Bigl(\frac{\beta}{j_{\nu,k}}\Bigr)^{2}
\Bigl(\frac{1}{x-\beta/j_{\nu,k}}+\frac{1}{x+\beta/j_{\nu,k}}\Bigr) = \int_\R\frac{d\Lom_{\nu,\beta}(t)}{x-t}.\]
Two finite positive measures with the same Stieltjes transform on $\C\setminus\R$ coincide, so $\mu=\Lom_{\nu,\beta}$.
The support of $\Lom_{\nu,\beta}$ is the closure of its atom set, which adjoins the accumulation point $0$.
The norm formula \Cref{eq:lommelnorms} is the identity $\|P_m\|_{\Lom_{\nu,\beta}}^2=\tau_0\prod_{j\le m}A_j^2$ \cite{Chihara} together with
$\prod_{j=1}^m(\nu+j)^{-1}=\Gamma(\nu+1)/\Gamma(\nu+m+1)$ and $\prod_{j=1}^m(\nu+j+1)^{-1}=\Gamma(\nu+2)/\Gamma(\nu+m+2)$.
\end{proof}

In particular the orthogonality measure of the recurrence places no mass at the accumulation point $0$ of its support,
the point left open in \cite{DickinsonPollakWannier} and settled by Goldberg \cite{Goldberg};
the Rayleigh identity \Cref{eq:tau0} confirms that the atoms alone exhaust the total mass.

\begin{maintheorem}\label{thm:lommel}
Let $\mu_{\nu,\beta}=\Lambda\Lom_{\nu,\beta}$ and let $t_m$ be defined by
\begin{equation}\label{eq:lommeltrec}
t_0=2,\qquad t_m=2-\frac{\beta^2}{4(\nu+m)(\nu+m+1)\,t_{m-1}},\quad m\ge1.
\end{equation}
Then the prediction errors, extremal values and Toeplitz determinants of $\mu_{\nu,\beta}$ are given by \Cref{thm:main} applied with $\sigma=\Lom_{\nu,\beta}$,
with the data \Cref{eq:lommelA2}, \Cref{eq:lommelnorms}; in
particular
\[E_m(\mu_{\nu,\beta}) = \frac{t_m}{2}\cdot\frac{\beta^2}{2(\nu+1)}\Bigl(\frac{\beta}{2}\Bigr)^{2m}
\frac{\Gamma(\nu+1)\Gamma(\nu+2)}{\Gamma(\nu+m+1)\Gamma(\nu+m+2)}, \qquad \Phi_m(0)=(-1)^m\,(t_m-1).\]
Moreover,
\begin{enumerate}
\item[(i)] $\displaystyle \prod_{m=1}^{\infty}\frac{t_m}{2}
=\Gamma(\nu+1)\Bigl(\frac{4}{\beta}\Bigr)^{\nu}J_\nu\Bigl(\frac{\beta}{2}\Bigr) =:\mathcal J_{\nu,\beta}\in(0,1)$,
and the Toeplitz determinants satisfy the identity and normalization limit
\[D_N=\frac{P_{N+1}(2)}{2^{N+1}}\prod_{m=0}^N\|P_m\|_{\Lom_{\nu,\beta}}^2, \qquad \frac{P_{N+1}(2)}{2^{N+1}}\ \longrightarrow\ \mathcal J_{\nu,\beta}\quad\text{as}\quad N\to\infty.\]
\item[(ii)] If
\begin{equation}\label{eq:lommelbase}
\beta^2<\frac{32(\nu+1)(\nu+2)}{\nu+3},
\end{equation}
then $t_1<t_2<\cdots$ and all conclusions of \Cref{thm:mono} hold. In particular every margin past the first strictly exceeds $A_m^2$.
\item[(iii)] As $m\to\infty$, the margin deviations obey the third-order law
\[\log\frac{1-\Phi_m(0)^2}{A_m^2}=\log\frac{t_m}{t_{m-1}} = \frac{\beta^2}{8(\nu+m-1)(\nu+m)(\nu+m+1)}\,\bigl(1+O(m^{-2})\bigr),\]
and $2-t_m=\tfrac12A_m^2\bigl(1+O(m^{-2})\bigr)$; since $t_m\to2$, for all sufficiently large $m$ we have $t_m>1$
and $1-|\Phi_m(0)|=2-t_m$, so $1-|\Phi_m(0)|=\dfrac{\beta^2}{8(\nu+m)(\nu+m+1)}\bigl(1+O(m^{-2})\bigr)$.
\end{enumerate}
\end{maintheorem}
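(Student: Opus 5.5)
The plan is to specialize \Cref{thm:main} to $\sigma=\Lom_{\nu,\beta}$ and then verify (i)–(iii) separately.

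\textbf{Main formulas.} By \Cref{prop:lommelid}, $\Lom_{\nu,\beta}$ is a symmetric measure with infinite support in $[-R,R]$, where $R=\beta/\sjz<2$. Its recurrence coefficients are \Cref{eq:lommelA2} and its norms are \Cref{eq:lommelnorms}. The edge recursion \Cref{eq:tdef} therefore becomes \Cref{eq:lommeltrec}. Substituting the norms into \Cref{eq:mainE} and \Cref{eq:maindet} gives the displayed formulas for $E_m$, $\Phi_m(0)$ and $D_N$, with nothing further to check.

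\textbf{Part (i).} Telescoping gives $\prod_{m=1}^{N}t_m/2=P_{N+1}(2)/2^{N+1}$, since $t_0/2=1$. The Hurwitz limit \Cref{eq:hurwitz} at $x=2$ gives $P_{N+1}(2)/2^{N+1}\to\prod_k(1-\beta^2/(4j_{\nu,k}^2))$, which equals $\Gamma(\nu+1)(4/\beta)^\nu J_\nu(\beta/2)=\mathcal J_{\nu,\beta}$. Every factor of this product lies in $(0,1)$, because $\beta/2<\sjz\le j_{\nu,k}$. The product converges to a positive limit because $\sum_k j_{\nu,k}^{-2}<\infty$, so $\mathcal J_{\nu,\beta}\in(0,1)$. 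Alternatively, \Cref{lem:tbounds} gives $2-t_m=O(A_m^2)$, which is summable, so the product of the $t_m/2$ converges to a positive limit.

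\textbf{Part (ii).} The coefficients $A_m^2$ are strictly decreasing, so it remains only to reduce \Cref{eq:basecond} to \Cref{eq:lommelbase}. With $A_1^2=\beta^2/(4(\nu+1)(\nu+2))$ and $A_2^2=\beta^2/(4(\nu+2)(\nu+3))$, condition \Cref{eq:basecond} reads
\[
\frac{1}{\nu+3}<\frac{1}{\nu+1}\Bigl(1-\frac{\beta^2}{16(\nu+1)(\nu+2)}\Bigr).
\]
This is equivalent to $\beta^2/(16(\nu+1)(\nu+2))<2/(\nu+3)$, which is exactly \Cref{eq:lommelbase}. \Cref{thm:mono} then applies, and since $A_m^2\to0$ the total-sum statement holds as well.

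\textbf{Part (iii).} This is the main computational step. Set $\epsilon_m=2-t_m=A_m^2/t_{m-1}$. \Cref{lem:tbounds} gives $\epsilon_m=O(m^{-2})$, so $t_{m-1}=2-O(m^{-2})$ and
\[
\epsilon_m=\tfrac12A_m^2\bigl(1+O(m^{-2})\bigr).
\]
Then
\[
\log\frac{t_m}{t_{m-1}}=\log\Bigl(1+\frac{\epsilon_{m-1}-\epsilon_m}{2-\epsilon_{m-1}}\Bigr).
\]
The difficulty is that the leading term $\tfrac12(A_{m-1}^2-A_m^2)$ is only of order $m^{-3}$, while the naive error $A_m^2\cdot O(m^{-2})$ is of order $m^{-4}$. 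That is fine for relative error $O(m^{-1})$ but not for the claimed $O(m^{-2})$. So I will sharpen the expansion:
\[
\epsilon_m=\frac{A_m^2}{2}\Bigl(1+\frac{\epsilon_{m-1}}{2}+O(m^{-4})\Bigr)=\frac{A_m^2}{2}+\frac{A_m^2A_{m-1}^2}{8}+O(m^{-6}).
\]
In the difference $\epsilon_{m-1}-\epsilon_m$, the second-order terms $A^2A^2/8$ differ by $O(m^{-5})$, which is relative $O(m^{-2})$ against the leading $m^{-3}$. Next, $\tfrac12(A_{m-1}^2-A_m^2)=\beta^2/(4(\nu+m-1)(\nu+m)(\nu+m+1))$ exactly. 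Dividing by $2-\epsilon_{m-1}=2(1+O(m^{-2}))$ and using $\log(1+u)=u(1+O(u))$ with $u=O(m^{-3})$ yields the third-order law. Finally, since $t_m\to2$, eventually $t_m>1$, so $|\Phi_m(0)|=t_m-1$ and $1-|\Phi_m(0)|=2-t_m=\epsilon_m$. This gives the stated asymptotics for $1-|\Phi_m(0)|$.
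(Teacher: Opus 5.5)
Your proposal is correct and follows the paper's proof essentially step for step: you specialize \Cref{thm:main} via \Cref{prop:lommelid}, use telescoping plus the Hurwitz limit \Cref{eq:hurwitz} for (i), and make the same rearrangement of \Cref{eq:basecond} for (ii). For (iii) you use the same sharpened expansion $2-t_m=\tfrac12A_m^2+\tfrac18A_m^2A_{m-1}^2+O(m^{-6})$, whose quartic terms contribute only $O(m^{-5})$ to the difference; the one unstated link, $\log\bigl((1-\Phi_m(0)^2)/A_m^2\bigr)=\log(t_m/t_{m-1})$, is immediate from \Cref{eq:mainmargin}.
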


\begin{proof}
\Cref{prop:lommelid} supplies the data \Cref{eq:lommelA2} and \Cref{eq:lommelnorms}, and the explicit forms are then \Cref{thm:main}.
For (i), telescoping gives $\prod_{m=0}^{N}t_m=P_{N+1}(2)$, so $\prod_{m=1}^N(t_m/2)=P_{N+1}(2)/2^{N+1}$,
which converges to the right side of \Cref{eq:hurwitz} at $x=2$;
the limit lies in $(0,1)$, since $\beta/2<\sjz$ places every factor of the canonical product in $(0,1)$
while the summability of $j_{\nu,k}^{-2}$ in \Cref{eq:tau0} keeps the product positive. The determinant identity is \Cref{eq:maindet}.

For (ii), the coefficients \Cref{eq:lommelA2} are strictly decreasing, and condition \Cref{eq:basecond} reads $\frac{\nu+1}{\nu+3}< 1-\frac{\beta^2}{16(\nu+1)(\nu+2)}$,
which rearranges to \Cref{eq:lommelbase}; \Cref{thm:mono} then applies.

For (iii), write $\delta_m=2-t_m$, so that \Cref{eq:lommeltrec} gives $\delta_m=A_m^2/(2-\delta_{m-1})$.
From $(2-\delta)^{-1}=\tfrac12\bigl(1+\tfrac{\delta}{2}+\tfrac{\delta^2}{2(2-\delta)}\bigr)$ this recursion is the identity
\[\delta_m=\frac{A_m^2}{2}\Bigl(1+\frac{\delta_{m-1}}{2}+\frac{\delta_{m-1}^2}{2\,t_{m-1}}\Bigr).\]
By \Cref{lem:tbounds}, $t_{m-1}\ge2-R$ and $\delta_{m-1}\le A_{m-1}^2/(2-R)$,
and $A_m^2=O(m^{-2})$ by \Cref{eq:lommelA2}, so the last term inside the bracket is $O(m^{-4})$ and
$\delta_m=\tfrac12A_m^2\bigl(1+O(m^{-2})\bigr)$.
Taking this at $m-1$ gives $\delta_{m-1}=\tfrac12A_{m-1}^2+O(m^{-4})$; substituting into the identity,
both error sources, the substitution error and the $\delta_{m-1}^2$ term, carry the prefactor
$\tfrac12A_m^2=O(m^{-2})$ and so contribute $O(m^{-6})$, giving
\[\delta_m=\tfrac12A_m^2+\tfrac18A_m^2A_{m-1}^2+O(m^{-6}).\]
While the iteration can be continued to higher order, the two terms displayed suffice here.
Differencing through $t_m-t_{m-1}=\delta_{m-1}-\delta_m$, the quartic terms contribute
$\tfrac18A_{m-1}^2\bigl(A_{m-2}^2-A_m^2\bigr)$; by \Cref{eq:lommelA2}, $A_{m-1}^2=O(m^{-2})$ and
\[A_{m-2}^2-A_m^2=\frac{\beta^2\bigl(2(\nu+m)-1\bigr)}{2(\nu+m-2)(\nu+m-1)(\nu+m)(\nu+m+1)}=O(m^{-3}),\]
so this contribution is $O(m^{-5})$ and
\[t_m-t_{m-1}=\frac{A_{m-1}^2-A_m^2}{2}+O(m^{-5}), \qquad A_{m-1}^2-A_m^2=\frac{\beta^2}{2(\nu+m-1)(\nu+m)(\nu+m+1)}.\]
With $x_m=(t_m-t_{m-1})/t_{m-1}=O(m^{-3})$ and $1/t_{m-1}=\tfrac12\bigl(1+O(m^{-2})\bigr)$ from $\delta_{m-1}=O(m^{-2})$,
\[\log(t_m/t_{m-1})=x_m\bigl(1+O(x_m)\bigr)=\tfrac12(t_m-t_{m-1})\bigl(1+O(m^{-2})\bigr),\]
which gives the first asymptotic. The estimate just obtained gives
\[\delta_m=2-t_m=\tfrac12A_m^2\bigl(1+O(m^{-2})\bigr) = \frac{\beta^2}{8(\nu+m)(\nu+m+1)}\bigl(1+O(m^{-2})\bigr).\]
Since $t_m\to2$, we have $t_m>1$ for all sufficiently large $m$. Hence, using $\Phi_m(0)=(-1)^m(t_m-1)$, we obtain for all sufficiently large $m$,
\[1-|\Phi_m(0)|=2-t_m=\delta_m,\]
and the second asserted asymptotic follows.
\end{proof}

\begin{example}\label{ex:halfinteger}
The fully elementary case is $\nu=\tfrac12$, where the Bessel zeros are $j_{1/2,k}=\pi k$ and the canonical product is $\mathcal J_{1/2,\beta}=\frac{\sin(\beta/2)}{\beta/2}$.
With $\beta=\pi/2$ the ensemble is the purely atomic measure with mass $(2k)^{-2}$ at each of the points $\pm1/(2k)$, $k\ge1$,
the reciprocals of the nonzero even integers.
The deviation constant in (iii) is $\beta^2/8=\pi^2/32$, the limit is $\mathcal J=\operatorname{sinc}(\pi/4)=2\sqrt2/\pi$,
and condition \Cref{eq:lommelbase} holds with a wide margin, $\pi^2/4< 240/7$.
\end{example}

\begin{remark}\label{rem:novelty}
Explicit hypergeometric families of orthogonal polynomials on the unit circle were developed by Sri Ranga and collaborators \cite{SriRanga2010,MFSRT};
in the complementary Romanovski--Routh setting the measures are absolutely continuous of Fisher--Hartwig/circular-Jacobi type,
with Bessel and Coulomb wave functions entering through the generating functions.
The Lommel lifts sit at the opposite end of the correspondence.
The measures are purely atomic, the Bessel function enters through the support and the Hurwitz limit of the edge ratios,
and the prediction errors and Toeplitz determinants are given explicitly through the edge data.
\end{remark}

\section{The Christoffel function evaluated at the edge}\label{sec:christoffel}
\Cref{thm:main} builds each output from two real-line sequences, the edge values $P_m(2)$ and the norms $\|P_m\|_\sigma^2$.
The Christoffel function of $\sigma$, from classical orthogonal-polynomial theory, combines those two sequences at the edge by summation over the degree.
Reading the edge data through it recovers the determinant formula and Bessel limit as Christoffel-function statements, fixes what the constant
$\mathcal J_{\nu,\beta}$ measures, and gives closed-form asymptotics for the Christoffel function of the Lommel ensemble.
The variational formalism is the real-line one of \cite{SimonDescendants}, \S1.2.

For a measure $\sigma$ with monic orthogonal polynomials $P_j$ and orthonormal polynomials
$p_j=P_j/\|P_j\|_\sigma$, the \emph{reproducing kernel} and \emph{Christoffel function} at a real
point $\zeta$ are
\begin{equation}\label{eq:cdkernel}
\begin{aligned}
K_n(x,y;\sigma)&=\sum_{j=0}^n p_j(x)\,p_j(y),\\
\text{and} \quad \lambda_n(\zeta;\sigma)&=\frac{1}{K_n(\zeta,\zeta;\sigma)}
=\min\Bigl\{\int|Q|^2\,d\sigma:\ \deg Q\le n,\ Q(\zeta)=1\Bigr\}.
\end{aligned}
\end{equation}
The kernel reproduces polynomials of degree at most $n$ against $\sigma$. The minimum is attained at the normalized kernel $K_n(\,\cdot\,,\zeta)/K_n(\zeta,\zeta)$.

Specializing \Cref{eq:cdkernel} at $\zeta=2$ and writing $u_j=P_j(2)^2/\|P_j\|_\sigma^2$ for the
squared orthonormal edge values,
\begin{equation}\label{eq:Kedge}
K_n(2,2;\sigma)=\sum_{j=0}^n \frac{P_j(2)^2}{\|P_j\|_\sigma^2}=\sum_{j=0}^n u_j,\qquad
\lambda_n(2;\sigma)=\Bigl(\sum_{j=0}^n u_j\Bigr)^{-1}.
\end{equation}
The summands follow a one-step recursion in the edge sequence. From
\[P_j(2)/P_{j-1}(2)=t_{j-1},\quad\|P_j\|_\sigma^2/\|P_{j-1}\|_\sigma^2=A_j^2,\]
and $A_j^2=(2-t_j)\,t_{j-1}$ behind \Cref{eq:tdef},
\begin{equation}\label{eq:uratio}
\frac{u_j}{u_{j-1}}=\frac{t_{j-1}^2}{A_j^2}=\frac{t_{j-1}}{2-t_j}.
\end{equation}

\begin{lemma}\label{lem:edgechristoffel}
Suppose $A_m^2\to0$, so that $t_m\to2$ by \Cref{lem:tbounds}. Then the sum \Cref{eq:Kedge} is dominated
by its last term, and
\[\lambda_n(2;\sigma)=\frac{\|P_n\|_\sigma^2}{P_n(2)^2}\Bigl(1-\frac{2-t_n}{t_{n-1}}+o(2-t_n)\Bigr),\qquad\text{so}\qquad
\lambda_n(2;\sigma)\sim\frac{\|P_n\|_\sigma^2}{P_n(2)^2}.\]
In terms of the prediction errors, $\lambda_n(2;\sigma)\sim E_n/P_n(2)^2$.
\end{lemma}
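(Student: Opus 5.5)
The plan is to normalize the sum \Cref{eq:Kedge} by its last term and show that the normalized tail is $1+\frac{2-t_n}{t_{n-1}}+o(2-t_n)$. Write
\[
\frac{K_n(2,2;\sigma)}{u_n}=1+\frac{u_{n-1}}{u_n}+\sum_{j=0}^{n-2}\frac{u_j}{u_n},
\qquad u_n=\frac{P_n(2)^2}{\|P_n\|_\sigma^2}.
\]
By \Cref{eq:uratio}, $u_{j-1}/u_j=(2-t_j)/t_{j-1}$. The first correction is therefore exactly $(2-t_n)/t_{n-1}$. Inverting gives
\[
\lambda_n=u_n^{-1}\bigl(1+\epsilon_n\bigr)^{-1}=u_n^{-1}\bigl(1-\epsilon_n+O(\epsilon_n^2)\bigr),
\qquad \epsilon_n=\frac{2-t_n}{t_{n-1}}+r_n .
\]
Since $t_{n-1}\ge 2-R$ by \Cref{lem:tbounds}, we have $\epsilon_n=O(2-t_n)+r_n$. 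So the squared term is $o(2-t_n)$ once $2-t_n\to0$, which holds by \Cref{lem:tbounds}. Everything therefore reduces to showing that the remainder $r_n=\sum_{j\le n-2}u_j/u_n$ is $o(2-t_n)$.

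For the remainder, write $\delta_j=2-t_j$ and telescope:
\[
\frac{u_{n-k}}{u_n}=\prod_{i=n-k+1}^{n}\frac{\delta_i}{t_{i-1}}.
\]
The $k=2$ term is $\frac{\delta_n}{t_{n-1}}\cdot\frac{\delta_{n-1}}{t_{n-2}}$, which is $\delta_n\cdot o(1)$ because $\delta_{n-1}\to0$. For the full tail I would factor out $\delta_n/t_{n-1}$ and bound what remains:
\[
r_n\le\frac{\delta_n}{t_{n-1}}\sum_{k\ge2}\ \prod_{i=n-k+1}^{n-1}\frac{\delta_i}{t_{i-1}} .
\]
Each factor satisfies $\delta_i/t_{i-1}\le \delta_i/(2-R)$.

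This is the step I expect to be the main obstacle. The factors are small only for large $i$; for small $i$ they can be as large as about $2/(2-R)$, and this may exceed $1$. I would split the product at a fixed index $M$ chosen so that $\delta_i/t_{i-1}\le\eta<1$ for all $i\ge M$. This is possible because $\delta_i\to0$ and $t_{i-1}\to2$, so $\eta$ can be taken arbitrarily small. There are two ranges:
\begin{itemize}
\item For $n-k+1\ge M$, the sum over $k\ge2$ is at most a geometric series $\eta/(1-\eta)$.
\item For $n-k+1< M$, the product contains at least $n-M$ factors bounded by $\eta$, times a bounded number of $n$-independent factors, namely those with index below $M$. The finitely many such $k$ (at most $M$ of them) therefore contribute $C_M\,\eta^{\,n-M}\to0$.
\end{itemize}
Letting $n\to\infty$ and then $\eta\to0$ gives $r_n=o(\delta_n)$. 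This proves the displayed expansion, and $\lambda_n(2;\sigma)\sim\|P_n\|_\sigma^2/P_n(2)^2$ follows because the bracket tends to $1$.

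Finally, \Cref{eq:mainE} gives $E_n=\tfrac{t_n}{2}\|P_n\|_\sigma^2$, and $t_n\to2$. Hence $E_n\sim\|P_n\|_\sigma^2$, so $\lambda_n(2;\sigma)\sim E_n/P_n(2)^2$.
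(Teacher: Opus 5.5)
Your proposal is correct and follows essentially the paper's route: you normalize $K_n(2,2;\sigma)$ by the last summand $u_n$, telescope the ratios $u_{j-1}/u_j=(2-t_j)/t_{j-1}$, control the tail by splitting at a fixed index beyond which these ratios sit below a constant less than one (a geometric bound plus a vanishing contribution from the finitely many low indices), and then invert the bracket. The only difference is bookkeeping. The paper fixes $\rho\in(0,1)$, pulls out the two top ratios, and bounds the remaining sum uniformly by $M/(1-\rho)$, so that the smallness comes from $2-t_{n-1}\to0$. You instead pull out one ratio and send $\eta\to0$ after $n\to\infty$, which amounts to the same estimate.
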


Informally, the lemma asserts that the edge kernel is already its last summand,
up to a relative correction of the size of the chain gap $2-t_n$.

\begin{proof}
Set $r_m=(2-t_m)/t_{m-1}$ for $m\ge1$. By \Cref{eq:uratio} the ratio of consecutive summands is
$u_{m-1}/u_m=r_m$, by \Cref{eq:tdef} $r_m=A_m^2/t_{m-1}^2$, and by \Cref{lem:tbounds} $t_m\ge2-R$,
so $r_m\le A_m^2/(2-R)^2\to0$ and $\bar r=\sup_m r_m<\infty$. Factoring out the last term,
\[K_n(2,2;\sigma)=u_n\Bigl(1+\sum_{i=1}^{n}\ \prod_{l=0}^{i-1}r_{n-l}\Bigr).\]
Fix $\rho\in(0,1)$ and $N_0$ with $r_m\le\rho$ for $m>N_0$.
A descending product $\prod_{l=0}^{p-1}r_{j-l}$ of $p$ consecutive ratios contains at most $N_0$ factors
with index at most $N_0$, each at most $\max(\bar r,1)$, and every other factor is at most $\rho$,
so the product is at most $M\rho^{\,p}$ with $M=\bigl(\max(\bar r,1)/\rho\bigr)^{N_0}$,
uniformly in the top index $j$ and in $p$.
Writing $S_j=\sum_{p=0}^{j}\prod_{l=0}^{p-1}r_{j-l}$ for the finite sum of such products down to index $1$,
it follows that $S_j\le M/(1-\rho)$ for every $j$.
For $n\ge2$ the bracket regroups as $1+r_n+r_nr_{n-1}S_{n-2}$, and
\[r_nr_{n-1}S_{n-2}\ \le\ \frac{M}{1-\rho}\cdot\frac{r_n\,(2-t_{n-1})}{2-R}\ =\ o(2-t_n),\]
since $r_n\le(2-t_n)/(2-R)$ and $2-t_{n-1}\to0$. Hence the bracket is $1+r_n+o(2-t_n)$.
Since $r_n=O(2-t_n)$, inverting gives $\lambda_n(2;\sigma)=u_n^{-1}\bigl(1-r_n+o(2-t_n)\bigr)$,
the quadratic term being $O\bigl((2-t_n)^2\bigr)=o(2-t_n)$,
which is the first display, and its leading term is the asymptotic.
For the last statement, $E_n=(t_n/2)\|P_n\|_\sigma^2$ from \Cref{eq:mainE} and $t_n\to2$ give
$\|P_n\|_\sigma^2\sim E_n$ up to the factor $2/t_n\to1$.
\end{proof}

\begin{remark}
At a point of the support the Christoffel function converges to the mass there, $\lambda_n(x_\ast;\sigma)\to\sigma(\{x_\ast\})$ \cite{SimonDescendants}.
The edge $x=2$ lies strictly outside $[-R,R]$, so $\lambda_n(2;\sigma)\to0$ instead,
the reciprocal $K_n(2,2)=\sum u_j$ growing because the ratios $u_n/u_{n-1}=t_{n-1}/(2-t_n)$ diverge as $2-t_n\to0$, at the rate stated in the lemma.
\end{remark}

The kernel also has a closed Christoffel-Darboux form, useful when the polynomials are explicit.
In the monic normalization the confluent identity reads
\begin{equation}\label{eq:confluentCD}
K_n(x,x;\sigma)=\frac{P_{n+1}'(x)\,P_n(x)-P_n'(x)\,P_{n+1}(x)}{\|P_n\|_\sigma^2},
\end{equation}
so $\lambda_n(2;\sigma)=\|P_n\|_\sigma^2/W_n(2)$ with $W_n=P_{n+1}'P_n-P_n'P_{n+1}$ a polynomial
of degree $2n$ in the variable $x$ whose coefficients are determined by the recurrence data, evaluated once at the edge. For families with explicit polynomials
\Cref{eq:confluentCD} is an exact finite formula, and \Cref{lem:edgechristoffel} is its large-$n$ form.

By summation over the degree the edge data determine the Christoffel function. If multiplied instead, the norms form a determinant.
Write $h_{N+1}(\sigma)=\det\bigl[\int x^{i+j}\,d\sigma\bigr]_{i,j=0}^{N}=\prod_{m=0}^N\|P_m\|_\sigma^2$
for the Hankel determinant of $\sigma$, the line-side object behind Simon's identities
$\|P_n\|_\sigma^2=h_{n+1}/h_n$ and $A_n^2=h_{n-1}h_{n+1}/h_n^2$. The determinant formula of \Cref{thm:main}
then factors the Toeplitz determinant of the circle measure as the Hankel determinant of the line measure times one edge value,
\begin{equation}\label{eq:dethankel}
D_N(\mu)=\frac{P_{N+1}(2)}{2^{N+1}}\,h_{N+1}(\sigma).
\end{equation}
For the Lommel family the ratio of the two determinants converges,
\[\frac{D_N(\mu_{\nu,\beta})}{h_{N+1}(\Lom_{\nu,\beta})}=\frac{P_{N+1}(2)}{2^{N+1}}\xrightarrow[N\to\infty]{}\mathcal J_{\nu,\beta},\]
by \Cref{thm:lommel}(i). This fixes the meaning of $\mathcal J_{\nu,\beta}$ as the limiting ratio of the circle Toeplitz determinant to the line Hankel determinant,
the edge value $P_{N+1}(2)/2^{N+1}$ being all that separates them.
The same constant sets the scale of the edge Christoffel function through $P_n(2)\sim\mathcal J_{\nu,\beta}\,2^n$,
where \Cref{lem:edgechristoffel} meets \Cref{eq:dethankel}.

\begin{corollary}\label{cor:lommelchristoffel}
For the Lommel ensemble $\Lom_{\nu,\beta}$, with $\mathcal J_{\nu,\beta}$ as in \Cref{thm:lommel}(i),
\[\lambda_n(2;\Lom_{\nu,\beta})\sim\frac{\|P_n\|_{\Lom_{\nu,\beta}}^2}{\mathcal J_{\nu,\beta}^2\,4^n} = \frac{\beta^2}{2(\nu+1)\,\mathcal J_{\nu,\beta}^2}\Bigl(\frac{\beta}{4}\Bigr)^{2n}
\frac{\Gamma(\nu+1)\Gamma(\nu+2)}{\Gamma(\nu+n+1)\Gamma(\nu+n+2)}.\]
The decay is faster than any geometric rate, with $\lambda_n/\lambda_{n-1}\sim A_n^2/4 = (\beta/4)^2/\bigl((\nu+n)(\nu+n+1)\bigr)\to0$.
\end{corollary}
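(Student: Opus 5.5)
The plan is to combine \Cref{lem:edgechristoffel} with \Cref{thm:lommel}(i) and the closed norm formula \Cref{eq:lommelnorms}. First I check that the hypothesis of \Cref{lem:edgechristoffel} holds. By \Cref{eq:lommelA2}, $A_m^2=\beta^2/(4(\nu+m)(\nu+m+1))\to0$. The lemma then gives $\lambda_n(2;\Lom_{\nu,\beta})\sim\|P_n\|^2/P_n(2)^2$.

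Second, I replace $P_n(2)^2$ by $\mathcal J_{\nu,\beta}^2\,4^n$. By \Cref{thm:lommel}(i), $P_{n}(2)/2^{n}\to\mathcal J_{\nu,\beta}$; the index shift from $N+1$ to $n$ is immaterial. The limit lies in $(0,1)$, so in particular it is nonzero. Therefore $P_n(2)^2\sim\mathcal J_{\nu,\beta}^2 4^n$, and asymptotic equivalence is preserved under quotients. This yields the first asymptotic, $\lambda_n\sim\|P_n\|^2/(\mathcal J^2 4^n)$.

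Third, I substitute \Cref{eq:lommelnorms}. Its factor $(\beta/2)^{2n}$ divided by $4^n$ gives $(\beta/4)^{2n}$. The prefactor $\beta^2/(2(\nu+1))$ and the Gamma ratio carry over unchanged, which is the displayed closed form; this step is pure algebra.

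For the ratio statement I divide the closed forms at $n$ and $n-1$; the constants $\mathcal J$ and $\tau_0$ cancel. The ratio of asymptotic equivalents is legitimate, since $a_n\sim b_n$ and $a_{n-1}\sim b_{n-1}$ give $a_n/a_{n-1}\sim b_n/b_{n-1}$. The Gamma ratio contributes $1/((\nu+n)(\nu+n+1))$ and the power contributes $(\beta/4)^2$. Together these give $(\beta/4)^2/((\nu+n)(\nu+n+1))=A_n^2/4$, which tends to $0$.

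Super-geometric decay follows from this: for every $\rho>0$, eventually $\lambda_n/\lambda_{n-1}<\rho$, so $\lambda_n=O(\rho^n)$. The only point needing care is the nonvanishing of $\mathcal J_{\nu,\beta}$, which is already established in \Cref{thm:lommel}(i), so I expect no real obstacle.
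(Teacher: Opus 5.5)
Your proposal is correct and follows the paper's proof: you use \Cref{lem:edgechristoffel} for $\lambda_n\sim\|P_n\|^2/P_n(2)^2$, then \Cref{thm:lommel}(i) with $\mathcal J_{\nu,\beta}\neq0$ to replace $P_n(2)^2$ by $\mathcal J_{\nu,\beta}^2\,4^n$, then \Cref{eq:lommelnorms} for the closed form. The one difference is the ratio claim: the paper reads $\lambda_n/\lambda_{n-1}\sim u_{n-1}/u_n=(2-t_n)/t_{n-1}$ from \Cref{eq:uratio}, together with $2-t_n=\tfrac12A_n^2(1+O(n^{-2}))$ from \Cref{thm:lommel}(iii), whereas you divide the closed-form equivalents at $n$ and $n-1$, which amounts to $\|P_n\|^2/\|P_{n-1}\|^2=A_n^2$ and is equally valid without needing (iii).
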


\begin{proof}
\Cref{lem:edgechristoffel} gives $\lambda_n\sim\|P_n\|_{\Lom_{\nu,\beta}}^2/P_n(2)^2$,
and \Cref{thm:lommel}(i) gives $P_n(2)/2^n\to\mathcal J_{\nu,\beta}$, hence $P_n(2)^2\sim\mathcal J_{\nu,\beta}^2\,4^n$.
The closed form for $\|P_n\|_{\Lom_{\nu,\beta}}^2$ is \Cref{eq:lommelnorms}, and $(\beta/2)^{2n}/4^n=(\beta/4)^{2n}$.
The ratio statement is \Cref{eq:uratio} with $2-t_n=\tfrac12 A_n^2(1+O(n^{-2}))$ from \Cref{thm:lommel}(iii).
\end{proof}

\section*{Disclosure of AI Tool Use}

This research was human-directed and carried out with assistance from AI tools.
The systems used were Claude Opus 4.8 and Claude Fable 5 (Anthropic), and GPT 5.5 and GPT 5.6 Sol (OpenAI).
Prompted by the author, Claude orchestrated high-precision numerical experiments in Python using \texttt{mpmath},
and the identification central to \Cref{thm:lommel} emerged serendipitously
in the course of a question the author had posed for other reasons.
Claude was used interactively to develop an initial sketch of the paper,
which the author used as a template for writing the final manuscript.
Every tool-suggested passage, argument, and reference appearing in the paper was verified and edited by the author,
and all tool-suggested proofs were independently checked and rewritten.
ChatGPT was used for proofreading.
The author takes full responsibility for the content of this work.

\end{document}